\providecommand{\keywords}[1]{\textit{Keywords:} #1}
\newtheorem{theorem}{Theorem}
\newtheorem{proposition}[theorem]{Proposition}
\newtheorem{lemma}[theorem]{Lemma}
\newtheorem{defn}[theorem]{Definition} 
\begin{document}
\title{A scaled conjugate gradient based direct search algorithm for high dimensional box constrained derivative free optimization}

\author{Gannavarapu Chandramouli\\
Industrial Engineering and Operations Research, IIT Bombay Mumbai 400076, India\\
\and
Vishnu Narayanan\\
Industrial Engineering and Operations Research, IIT Bombay Mumbai 400076, India\\
}
\maketitle

	\begin{abstract}
		In this work, we propose an efficient method for solving box constrained derivative free optimization problems involving high dimensions.
		The proposed method relies on exploring the feasible region using a direct search approach based on scaled conjugate gradient with quadratic interpolation models.
		The extensive numerical computations carried out over test problems with varying dimensions demonstrate the performance of the proposed method for derivative free optimization.
	\end{abstract}

	\keywords{Derivative free optimization, box constrained optimization, quadratic models, scaled conjugate gradient, direct search}
\section{Introduction}
Many real world optimization problems are formulated as
$$\underset{x \in \Omega}{\text{min} }f(x)$$
where $f:\mathbb{R}^n \rightarrow \mathbb{R}$ is a nonlinear function, and $\Omega=\{x\in \mathbb{R}^n:l_i \leq x_i \leq u_i, \; i=1,\ldots,n\}$.
In many cases, the gradient information required to solve the optimization problem is either unavailable or too expensive to compute using the standard approaches.
Such problems are categorized under the purview of box constrained \emph{derivative free optimization}(DFO), and usually arise due to complex simulations or physical experiments where cost of computing the function value is fairly high. 
The main objective of the algorithms applied for solving these problems is to obtain the optimum while being highly frugal over the number of function evaluations. 
Hence, gradient based approaches using finite differences or automatic differentiation and metaheuristic methods are often ineffective in this domain because of high cost of function evaluations. 
Further, the presence of noise or nonsmooth function nature poses additional limitations.

Two fundamental classes of algorithms \cite{connBook,audet2017derivative} for solving derivative free optimization problems with guaranteed convergence to local optimum or Clarke-Jahn stationary point \cite{clarke1990optimization} (for nonsmooth cases) include the direct search and trust region methods.
Direct-search methods \cite{connBook} explore the feasible search space by generating new points satisfying some conditions like well poisedness.
These conditions are needed to ensure the proper exploration of search space around the current iterate.
These methods generally comprise of three steps: a search step to explore the space for better solutions, a poll step to ensure convergence to some stationary point, and subsequent parameter update.
Mesh adaptive direct search (MADS) \cite{audet_dennis_jr_mesh, abramson_audet_ea_orthomads} is a well known direct search approach. 
MADS was designed to address derivative free optimization problems which have nonsmooth nature to a large extent, and sometimes, have hidden constraints.
It generates a set of directions which, when rounded to hypothetical mesh (integer lattice), are dense on unit sphere.
For handling constraints, the application of the extreme barrier approach \cite{audet_dennis_jr_mesh} has been suggested.
\citet{vicente2012analysis} proposed a replacement for the integrality requirements of direct search methods like MADS and suggested a condition of sufficient decrease in the function value during the poll step.
This line of approach was further extended \cite{fasano2014linesearch} to handle constraints using projected line search and penalty approach.
Since these algorithms do not rely on the function nature, they are quite robust and have the additional advantage of parallel implementation \cite{gray2006algorithm, Le09b,audet2008parallel}. 
However, because of their inability to explore and utilize the curvature information, they often require large number of function evaluations for convergence.
Further, direct search methods \cite{custodio2007using,custodio2008using} using simplex gradients have been proposed, but their use was limited to reordering the poll directions instead of enhancing the search step.

The other prominent method for derivative free optimization involves the trust region approach where the curvature information about the function is utilized by building models over the explored points.
In this approach, a chosen model is fitted across already evaluated points at each iteration to obtain a close approximation to the original function.
The ability to capture approximate curvature information in trust region approach is attributed for providing fast solutions to derivative free optimization problems. 
Several algorithms utilizing models based on quadratic \cite{powell2006newuoa,wang2016conjugate}, kriging \cite{jones1998efficient,gramacy_le_digabel_mesh} and radial basis function \cite{wild_regis_ea_orbit,regis2013combining} have been proposed.
Notably, these algorithms require less function evaluations than direct search methods when the underlying function is well behaved.
The effectiveness of these algorithms for solving smooth, piecewise-smooth and noisy problems in derivative free optimization is outlined in \cite{more_wild_benchmarking}.
These approaches, however, are sequential in nature and their performance is affected when the function lacks good structural properties.
Recently, approaches \cite{custodio2010incorporating,conn_le_digabel_use,amaioua2018efficient} based on the blending of quadratic models with direct search methods have been reported which utilize the curvature information of the function to find a better solution.
However, when the function lacks a good structure, these methods revert to direct search.

Many optimization problems in practice involve large number of variables, and solving them under derivative free conditions poses significant challenges.
Such problems usually have a large number of local optima and lack good structural properties.
Consequently, finding global optimum for these problems is generally unrealistic and non-trivial because of substantial number of function evaluations needed.
Also, these problems are often plagued with noise (stochastic or non-stochastic) which generally leads to high and low frequency oscillations \cite{more_wild_benchmarking}; for instance, solving a differential equation with multiple parameters to a specified accuracy.
Further, presence of even simple constraints like bound ones, introduces additional limitations and increases the overall complexity of the problem. 
Thus, a good local optimization algorithm which can provide sufficiently good solutions with a small computational budget is desirable.
Further, the required algorithm should have the ability to exploit certain structural properties such as convexity and smoothness, wherever possible, in order to enhance its speed and accuracy.

In this work, we propose an elegant approach based on direct search for solving box constrained derivative free optimization problems.
In the proposed approach, we suggest a new strategy for integrating the quadratic models into direct search framework to achieve high performance due to the synergy of curvature information retrieval ability of quadratic models with search directions provided by scaled conjugate gradient. 
The structure of the paper is as follows. 
In section \ref{nota}, we give an overview of few definitions required for proving convergence results.
In section \ref{Background}, we provide background information about direct search, quadratic models, simplex gradient and scaled conjugate gradient.
In section \ref{Theory}, we outline the proposed approach and discuss convergence proofs.
We report our numerical results in section \ref{Results} followed by conclusions in section \ref{Conclusion}.

\section{Notation and definitions}
\label{nota}
For vectors $u,v\in \mathbb{R}^n$, we define max$\{u,v\}=x$ and min$\{u,v\}=y$ where $x,y \in \mathbb{R}^n$ such that $x_i=\text{max}\{u_i,v_i\}$ and $y_i=\text{min}\{u_i,v_i\}$ for $i=1,2,\ldots,n$. 
Let $\Omega=\{x\in \mathbb{R}^n:l_i \leq x_i \leq u_i,\; i=1,\ldots,n\}$.
We denote a ball with center $c \in \mathbb{R}^n$ and radius $r \in \mathbb{R}_+$ as $\mathcal{B}(c,r)$.
We now present few definitions and lemmas from \cite{fasano2014linesearch}, which are required for proving convergence results.
\begin{defn}
	For a point $x \in \Omega$, cone of feasible directions $D(x)$ is defined as:
	\begin{equation*}
	D(x)=\{d\in \mathbb{R}^n:\, d_i \geq 0 \text{ if } x_i=l_i,\; d_i \leq 0 \text{ if } x_i=u_i,\; d_i \in \mathbb{R} \text{ if } l_i<x_i<u_i, \; i=1,\ldots,n\}
	\end{equation*}
\end{defn}

\begin{lemma}\label{coneLemma}
	Let $\{x_k\} \subset \Omega$ be a sequence such that $\underset{k \rightarrow \infty}{\text{lim}}x_k \rightarrow \bar{x}$. There exists $m \in \mathbb{N}$ such that for all $k>m$,
	\begin{equation*}
		D(\bar{x}) \subseteq D(x_k).
	\end{equation*}
\end{lemma}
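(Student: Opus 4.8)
The plan is to reduce the set inclusion to a coordinate-by-coordinate comparison of the active bound constraints at $\bar{x}$ and at the iterates $x_k$. For any $x \in \Omega$, introduce the active index sets $L(x)=\{i : x_i = l_i\}$ and $U(x)=\{i : x_i = u_i\}$; by the definition of $D(x)$, a direction $d$ lies in $D(x)$ precisely when $d_i \geq 0$ for every $i \in L(x)$ and $d_i \leq 0$ for every $i \in U(x)$, with no restriction on the remaining coordinates. Since a cone cut out by fewer active constraints is larger, it suffices to show that for all sufficiently large $k$ one has $L(x_k) \subseteq L(\bar{x})$ and $U(x_k) \subseteq U(\bar{x})$.

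The heart of the argument is the observation that coordinate-wise convergence can only deactivate a bound, never newly activate a conflicting one. Concretely, fix a coordinate $i \notin L(\bar{x})$, so that $\bar{x}_i > l_i$. Because $(x_k)_i \to \bar{x}_i$, there is a threshold beyond which $(x_k)_i > l_i$, and hence $i \notin L(x_k)$; the symmetric argument applied to any $i \notin U(\bar{x})$ yields a threshold beyond which $i \notin U(x_k)$. As there are only finitely many coordinates, letting $m$ be the maximum of all these thresholds gives a single index for which $L(x_k) \subseteq L(\bar{x})$ and $U(x_k) \subseteq U(\bar{x})$ hold simultaneously for every $k > m$.

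With the inclusions in hand the conclusion is immediate: take any $d \in D(\bar{x})$ and any $k > m$. If $i \in L(x_k)$ then $i \in L(\bar{x})$, so $d_i \geq 0$; if $i \in U(x_k)$ then $i \in U(\bar{x})$, so $d_i \leq 0$. Thus $d$ satisfies every constraint defining $D(x_k)$, giving $d \in D(x_k)$ and therefore $D(\bar{x}) \subseteq D(x_k)$. I do not anticipate a serious obstacle here; the only points needing care are the uniformity of $m$ across coordinates, which is handled by finiteness, and the degenerate case $l_i = u_i$, in which that coordinate is fixed, $i$ belongs to both active sets at every feasible point, and the constraint $d_i = 0$ is identical at $\bar{x}$ and at each $x_k$, so the inclusion is preserved trivially.
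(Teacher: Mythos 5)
Your proof is correct: the reduction to the active index sets $L(x)$ and $U(x)$, the observation that convergence can only deactivate bounds for large $k$, and the uniform threshold over finitely many coordinates together give exactly the standard argument for this fact. Note that the paper itself states Lemma~\ref{coneLemma} without proof, importing it from \cite{fasano2014linesearch}, where the argument is essentially the one you have written, so there is nothing to reconcile; your handling of the degenerate case $l_i=u_i$ is also correct.
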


\begin{defn}
	The Clarke-Jahn generalized directional derivative (\cite{clarke1990optimization}) of Lipschitz function $f$ at $x \in \Omega$ along direction $d \in D(x)$ is defined as 
\begin{equation}
	f^{\circ}(x;d)=\underset{y\rightarrow x,\; t \downarrow 0}{\text{lim sup }}\frac{f(y+td)-f(y)}{t},
\end{equation}
	where $y\in \Omega$ and $y+td \in \Omega$. Also, $\hat{x} \in \Omega$ is a Clarke-Jahn stationary point for $f$ if
	\begin{equation}
		f^{\circ}(\hat{x};d) \geq 0 \qquad \forall d \in D(\hat{x}).
\end{equation}
\end{defn}

\begin{defn}
	Let $D=\{d_k\}_K$ be a sequence of normalized directions where $K$ is a set of indices i.e. $K=\{0,1,\ldots\}$. Given any direction $d \in \mathbb{R}^n$ such that $\|d\|_2=1$ i.e. $d \in \mathcal{B}(0,1)$ and any $\epsilon >0$, if there exists a direction $d_k \in D$ such that $\|d-d_k\|_2 < \epsilon$, then $D$ is said to be dense \cite{fasano2014linesearch} in $\mathcal{B}(0,1)$.
\end{defn}

\section{Background}\label{Background}
\subsection{Direct Search}
A typical direct search iteration is composed of following steps:
\begin{description}
\item[Search Step:] In this step trial points are generated using some user defined approach. 
Implementation of problem specific procedures in the search step can greatly enhance the performance of the algorithm.
Literature suggests the use of approaches like variable neighbourhood search \cite{audet_bechard_ea_nonsmooth}, quadratic models \cite{conn_le_digabel_use}, and Treed Gaussian Process (TGP) \cite{gramacy_le_digabel_mesh} in the search step of MADS.
If a new point with function value better than incumbent solution is found, the search step is declared as successful.

\item[Poll Step:] Unlike the search step, this does not offer any flexibility, but ensures the convergence to a local optimum.
In this step, finite number of new directions are constructed to create new trial points at each iteration.
		These directions are in fact vectors of some positive basis (different at each iteration) and the distance of generated trial points is bounded above by a poll size parameter.
The set of such directions generated over infinite iterations, when normalized, are required to be dense on the unit sphere.
If any of the trial points along these directions has function value better than the incumbent solution, then poll step is termed as successful.

\item[Parameter Update:] If poll step is successful then the poll size parameter is either increased or kept constant but in case of failure it is reduced by some positive factor $\tau$ where $\tau >1$.
	A systematic reduction of poll size parameter by a positive factor during consecutive poll step failures and the choice of different positive basis at each iteration for direction generation ensures convergence to some local optimum.
\end{description}


\subsection{Generation of directions}
An important component in the poll step procedure of direct search is the generation of directions which, after normalization, are dense on the unit sphere.
These directions are generated from a positive spanning set using approaches based on $n+1$ equiangular directions \cite{alberto_nogueira_ea_pattern}  in $\mathbb{R}^n$ and  $2n$ orthogonal directions \cite{abramson_audet_ea_orthomads} in $\mathbb{R}^n$.
The dense set however, is generated by rotating these directions along a new direction at each iteration.
This new direction can be generated using Halton sequence \cite{halton1960efficiency}, Sobol sequence \cite{sobol1976uniformly, fasano2014linesearch}, simplex division approach \cite{edelsbrunner2000edgewise} or random operation \cite{audet_dennis_jr_mesh}.
Rotation along these directions can be carried out using Householder transformation \cite{golub2012matrix}.

We now give a brief overview of quadratic models, simplex gradient and scaled conjugate gradient which will be needed for the design of our approach.
\subsection{Quadratic Models}\label{quadModels}
Consider a sample set $Y$ of $p+1$ interpolation points i.
e.
$Y=\{y^0,y^1,\ldots,y^p\}$ and a polynomial basis $\phi$ of degree less than or equal to $2$ in $\mathbb{R}^n$.
So $\phi$ can be expressed as natural basis of monomials i.e.
\begin{align}
	\phi &= \{\phi_0(x),\phi_1(x),\ldots,\phi_q(x)\}\notag\\
	&= \{1,x_1,x_2,\ldots, x_n, \frac{x^2_1}{2},\frac{x^2_2}{2},\ldots,\frac{x^2_n}{2}, x_1x_2, x_1x_3,\ldots,x_{n-1}x_n\},
\end{align}
where $q+1=\frac{(n+1)(n+2)}{2}$ is the number of elements in the polynomial basis.
A quadratic model $m(y)=\alpha^T\phi(y)$ built over the set $Y$ should satisfy the interpolation condition:
\begin{equation}\label{quadInterpolateEqn}
	M(\phi,Y)\alpha=f(Y),
\end{equation}
where $f(Y)=(f(y^0),f(y^1),\ldots,f(y^p))$, set of function values of points in $Y$ and 
\begin{equation*}
	M(\phi,Y)=
	\begin{bmatrix}
		\phi_0(y^0) & \phi_1(y^0) & \ldots & \phi_q(y^0)\\
		\phi_0(y^1) & \phi_1(y^1) & \ldots & \phi_q(y^1)\\
		\vdots      & \vdots      & \ddots &  \vdots    \\
		\phi_0(y^p) & \phi_1(y^p) & \ldots & \phi_q(y^p)
	\end{bmatrix}.
\end{equation*}

Based on the number of sample points $p+1$ and $q$, there are three possible scenarios for solving the above system of linear equations \ref{quadInterpolateEqn}:
\begin{enumerate}
	\item When $p>q$, we have overdetermined system which can be solved in least squares sense i.e.
		\begin{equation}\label{quadLSeqn}
			\underset{\alpha \in \mathbb{R}^{q+1}}{\text{min}} ||M(\phi,Y)\alpha-f(Y)||^2,
		\end{equation}
	\item When $p=q$, we have determined system which can be solved directly.
	\item When $p<q$, we have underdetermined system.
\end{enumerate}

In practice, the first two scenarios rarely occur for large dimension problems because of limited function evaluation budget. 
Accordingly, there exist infinite possible solutions to the underdetermined system.
A possible solution can be derived \cite{connBook,custodio2010incorporating} by using Minimum Frobenius Norm (MFN).
For underdetermined case, it was shown \cite{connBook} that the error between $f$ and $m$, and between $\nabla f$ and $\nabla m$ is upper bounded by terms dependent on the norm of the Hessian of model $m$.
Hence, a model $m$ with least Hessian norm, is desirable.
The elements of Hessian, which are essentially quadratic terms of $\alpha$, can be minimized by building models based on MFN. 
So, bifurcating $\alpha$ into linear terms $\alpha_L \in \mathbb{R}^{n+1}$ and quadratic terms $\alpha_Q \in \mathbb{R}^{n_Q}$ where $n_Q=\frac{n(n+1)}{2}$, we have $m(y)=\alpha_L^T \phi_L+\alpha_Q ^T \phi_Q$ where $\phi_L=\{1,x_1,\ldots,x_n\}$ and $\phi_Q=\{\frac{x^2_1}{2},\frac{x^2_2}{2},\ldots,\frac{x^2_n}{2}, x_1x_2, x_1x_3,\ldots x_{n-1}x_n\}$.
The solution $\alpha$ is then obtained by solving the optimization problem:
\begin{align}\label{MFN}
	&\underset{\alpha_Q\in \mathbb{R}^{n_Q}}{\text{min }} \frac{1}{2}||\alpha_Q||^2\notag\\
	& \text{s.t. } M(\phi_L,Y)\alpha_L+M(\phi_Q,Y)\alpha_Q=f(Y).
\end{align}
The quadratic minimization problem reduces to solving a linear system
\begin{equation}
	F(\phi,Y)\begin{bmatrix}
		\mu \\
		\alpha_L
	\end{bmatrix}
	=
	\begin{bmatrix}
		f(Y)\\
		0
	\end{bmatrix},
\end{equation}
where
\begin{equation*}
	F(\phi,Y)=\begin{bmatrix}
		M(\phi_Q,Y)M(\phi_Q,Y)^T & M(\phi_L,Y) \\
		M(\phi_L,Y)^T & 0
	\end{bmatrix}
\end{equation*}
The linear term $\alpha_L$ is obtained directly by solving above linear system while quadratic term $\alpha_Q$ is obtained by computing $\alpha_Q=M(\phi_Q,Y)^T\mu$.

\subsection{Simplex Gradient}
	A sample set $Y=\{y^1, \ldots y^q\}$ of $q$ points in $\mathbb{R}^n$ such that they all lie within a ball $B(y^0,\Delta)$ of radius $\Delta \in \mathbb{R}_+ $ with center $y^0 \in \mathbb{R}^n$, is said to be poised \cite{custodio2007using} if rank$(Y)=\text{min}\{n,q\}$, or $Y$ has full column rank.
	The simplex gradient \cite{custodio2007using,Bortz98thesimplex} $\nabla_sf(y^0)$ of $f$ at $y^0$ is the solution to the linear system
	\begin{equation}\label{simplexGradEqn}
		Y^T\nabla_sf=b
	\end{equation}
where $b=[f(y^1)-f(y^0),\ldots,f(y^q)-f(y^0)]^T$.
If $q>n$, then the simplex gradient is obtained by solving a least square problem to above system.
The error  between a simplex gradient and actual gradient is upper bounded by $\Delta$ \cite{custodio2007using} such that
	\begin{equation}
		\|\nabla f(y^0)- \nabla_s f(y^0)\| \leq \eta_{eg} \Delta,
	\end{equation}
	where, $\eta_{eg}$ is some constant dependent upon $Y$ and $\Delta$.\\
	Simplex gradients are, in a broad sense, linear models for interpolation.

\subsection{Scaled Conjugate Gradient}
Conjugate gradient algorithm \cite{hager_zhang_survey} is a prominent approach for solving nonlinear optimization problems. 
For our work, we use a particular variant \cite{andrei2008unconstrained} of the conjugate gradient approach.
For minimizing a nonlinear function $f(x)$ with a known initial point $x_0$, this algorithm generates a sequence of points $x_i$ such that:
	\begin{align}\label{scg}
		x_{i+1}&=x_i+\alpha_i d_i \notag\\
		d_{i+1}&=-\theta_{i+1}g_{i+1}+\beta_i s_i \notag\\
		d_0 &=-g_0
	\end{align}
where $\alpha_i$ is a positive step size obtained by line search along $d_i \in \mathbb{R}^n$ direction and $g_i=\nabla f(x_i)$ is the gradient of $f$ at $x_i$, and $s_i=x_{i+1}-x_{i}$.
$\theta_{i+1}$ is a matrix or scalar parameter and $\beta_i$ is a scalar.
A value for $\beta_i $\cite{birgin2001spectral} in terms of $\theta_{i+1}, \, s_i$ and $y_i=g_{i+1}-g_i$ is:
\begin{equation}\label{betaCG}
	\beta_i=\frac{(\theta_{i+1}y_i-s_i)^Tg_{i+1}}{y_i^Ts_i}
\end{equation}

\section{Theory}\label{Theory}
We now state our method for solving box constrained derivative free optimization problems, which is composed of novel poll step and search step methods.
\subsection{Poll Step}
Our poll step comprises of following steps:
\begin{description}
	\item [Handling Box Constraints:] We use extreme barrier approach \cite{audet_dennis_jr_mesh} to handle the box constraints.
		In this approach, if an infeasible point is encountered then its function value is set to infinity.
		However, for practical purposes, we assign a large value to $f(x)$ for e.g. $1.79\times10^{308}$.
	\item [Equiangular Directions:] We generate a set $Y_0$ of $n+1$ equiangular directions about the origin.
		These directions are later translated along to current iterate $x_k$ to create a new set $Y_k$.

	\item [Sufficient Decrease and Parameter Update:] New trial points are generated along the equiangular directions, translated to incumbent solution $x_k$ and scaled appropriately to direct search radius $r_k$, where $r_k$ is upper bounded by poll size parameter $\Delta^p_k$ i.e. $r_k \leq \Delta^p_k$.
		Poll step is considered successful if there is a sufficient decrease in function values at these points compared to $f(x^k)$ i.e.
		\begin{equation}\label{suffdecrease}
			f(x_{k+1}) < f(x_k)-\gamma(r_k)
		\end{equation}
		where function $\gamma:\mathbb{R}_+ \rightarrow \mathbb{R}_+$ is defined as $\gamma(r)=\rho r^2$ and $r_k$ is the direct search radius at iteration $k$.
		$\rho$ is a positive scalar parameter such that $\rho <1$.

		In the event of failure of poll step, $\Delta^p_k$ is scaled down to $\frac{\Delta^p_k}{\tau_p}$ where $\tau_p>1$.
		Consequently $r_k$ is also updated as $r_{k+1}=\frac{r_k}{\tau_p}$.

	\item [Rotation of directions:] If the poll step is unsuccessful, a new direction $u_k$ is generated using Halton sequence.
		A new set $Y_{k+1}$ of $n+1$ equiangular direction vectors are generated by rotating the directions in $Y_0$ using Householder transformation to contain the direction $u_k$.
		The rotation of minimal positive basis using Householder transformations was first suggested in \citet{alberto_nogueira_ea_pattern}.
		This is accomplished by a matrix vector multiplication i.e. $d_{k+1}=H_{k}d_k$ where  $d_k \in Y_k$, $d_{k+1} \in Y_{k+1}$ and matrix $H_k \in \mathbb{R}^{n\times n}$ is given by:
		\begin{equation}\label{householder}
	H_k=I-2\frac{(d_k-u_k)(d_k-u_k)^T}{||d_k-u_k||^2}.
\end{equation}
		These directions are then scaled to length $r_{k+1}$ and new trial points are generated along them.
        Subsequently function values are evaluated at them and checked for sufficient decrease condition mentioned in Eq.(\ref{suffdecrease}).
\end{description}

\begin{algorithm}[t]
	\caption{Poll Step}\label{pollStep}
	\begin{algorithmic}[1]
		\STATE \textbf{INPUT}
		\STATE The incumbent solution $x_k$, direct search radius $r_k$, $0< \rho <1$, $\tau_l>1$, poll size parameter $\Delta^p_k$ and threshold $\epsilon >0$.
		\STATE A generated set $Y_0$ of $n+1$ equiangular directions of unit length about origin.
		\WHILE {$r_k > \epsilon$}
		\STATE Translate directions in $Y_0$ with $x_k$ to create $Y_k$ i.e. set $y_i \in Y_k$ as $y_i=y_i+x_k$ for $i=1,\ldots n+1$.
		\STATE Compute the set of $n+1$ points $A_k=\{x_1,\ldots,x_{n+1}\}$ along these directions, and scale them such that they lie on the ball $\mathcal{B}(x_k,r_k)$ where is $r_k \leq \Delta^p_k$.
		\STATE Evaluate function values at these points. Set $f(x)=\infty$ if $x\notin \Omega$.
		\IF {Sufficient decrease condition \ref{suffdecrease} is satisfied}
			\STATE \textbf{RETURN:} Best point $\underset{x_i \in A}{\text{argmin}}\{f(x_i)\}$, poll step radius $r_k$ and $\Delta^p_k$.
		\ELSE
			\STATE Using Halton sequence generate a new direction $u_k$.
			\STATE Using Householder transformation equation \ref{householder} rotate the equiangular directions of set $Y_0$ about $u_k$.
			\STATE Let $Y_k$ be the set of these directions.
			\STATE $k \gets k+1$
			\STATE Update $\Delta^p_{k+1}=\frac{\Delta^p_k}{\tau_l}$ and $r_{k+1}=\frac{r_k}{\tau_l}$ 
		\ENDIF
		\ENDWHILE
		\IF {$r_k < \epsilon$}
		\STATE Stationary point achieved.
		\STATE Terminate.
		\ENDIF
	\end{algorithmic}
\end{algorithm}
Algorithm \ref{pollStep} terminates whenever the sufficient decrease condition is satisfied returning the best point and direct search radius.

\subsection{Search Step}
Our approach consists of following important steps:
\begin{description}

	\item [Quadratic Model Step:]
		We build a quadratic model around the trial points and the incumbent solution $x_k$, generated in poll step. 
		This includes all points generated during consecutive failures of poll step about $x_k$.
		Least squares or MFN models are chosen based on the number of points available. 
		If Hessian of the quadratic model is positive definite, we compute its unique minima $y_k$ by solving a linear equation.
		If the new minima is infeasible i.e. outside the bound constraints, we try to obtain a new solution by carrying out line search along $y_k-x_k$ such that its solution lies within the bound constraints.
		If Hessian of the model is not positive definite, no attempt is made to compute its minima.

	\item [Simplex Gradient:]
		We compute the simplex gradient $\nabla_sf_k$ around $x_k$. 
		We include all available feasible points within a ball of radius $\Delta^p_k$ and center $x_k$.
		If number of points are greater than $n+1$ then it is computed in the regression sense i.e. by solving a least square problem.
		We evaluate function value at the point say $x_g$, in the direction $-\nabla_sf_k$, at a distance $\Delta^p_k$ from $x_k$.

	\item [Vicinity Search:]
		We sort all the points in the last poll step according to their function values in ascending order.
		We choose first $l$ points from the list.
		Now we take the average of these points with the point having best function value.
		We generate $l$ points along directions from $x_k$ to these averages such that their distance from $x_k$ is $r_k$ and evaluate function values at them.

	\item [Scaled Conjugate Gradient Step:]
		Let $x_b$ be the point with best function value at iteration $k$.
		Then compute the positive semidefinite matrix $\theta=-\frac{(x_b-x_k)(x_b-x_k)^T}{(x_b-x_k)\nabla_sf_k}$.
		We now compute $\beta$ from equation \ref{betaCG} and corresponding new search direction for conjugate gradient.
		We then carry out a line search along the new direction inside the feasible region $\Omega$ to obtain a point with better function value.

\end{description}

\subsection{Final Algorithm}
As the algorithm is constituted of Scaled Conjugate Gradient (SCG) and direct search (DS) for solving box constrained (BC) DFO, we name our algorithm as BCSCG-DS. 
We now summarize our approach BCSCG-DS in Algorithm \ref{cscgmadsAlgo}.

\begin{algorithm}
	\caption{BCSCG-DS algorithm}\label{cscgmadsAlgo}
	\begin{algorithmic}[1]
		\STATE \textbf{Initialization}:
		\STATE Set $x^0 \in \Omega$ as the center or starting point.
		\STATE $0< \rho <1$, $\tau_p>1$, $0<\epsilon_2<1$, $\tau_u>1$ and threshold $\epsilon >0$.

		\STATE Initial poll size parameter $\Delta_0^p >0$ and direct search radius $0<r_0 \leq \Delta_0^p$.
		\STATE Set $l$, the number of points to be considered for vicinity search step.
		\STATE  Generate a set $Y_0$ of $n+1$ equiangular directions of unit length about origin.
		\STATE \textbf{Iteration $k$}
		\WHILE {Termination Criteria of poll step is not met}
		\STATE Do \textbf{Poll Step} using algorithm \ref{pollStep}.

		\STATE Begin \textbf{Search Step}
		\STATE \textbf{Quadratic Model}
		\IF {$x_i \in \Omega$ for all $x_i \in A_k$}
			\STATE Fit a quadratic model $m$ over all feasible points generated during the current poll step.
			\STATE If hessian of $m$ is positive definite then find its minima $y_k$.
			\STATE If $y_k\notin \Omega$ do line search along $y_k-x_k$ to find a good feasible solution 
			\STATE Update the best point $x_b$.
		\ENDIF

		\STATE \textbf{Vicinity Search}
		\IF {$x_i \in \Omega$ for all $x_i \in A_k$}
			\STATE Collect all previous evaluated feasible points within the ball $\mathcal{B}(x_k,r_k(1+\epsilon_2))$.
			\STATE Compute simplex gradient ``$\nabla_sf$'' at $x_k$ from these points using linear system \ref{simplexGradEqn} 
			\STATE Compute point $x_g$ along $-\nabla_sf$ such that $\|x_g-x_k\|_2=r_k$ and add it to set $A_k$
		\ELSE
			\STATE Set $x_g=x_b$.
		\ENDIF
		\STATE Sort the points in $A_k$ in ascending order of their function values.
		\STATE Select first $l$ points. Let their set be $V$.
		\STATE Compute the average of best point $x_b$ with points in $V$ and store them in $\bar{V}$.
		\STATE Evaluate function values for points in $\bar{V}$ and update the best point $x_b$.

		\STATE \textbf{Scaled Conjugate Gradient}
		\STATE Compute matrix $\theta_k=-\frac{(x_b-x_k)(x_b-x_k)^T}{(x_b-x_k)\nabla_sf_k}$.
		\STATE Evaluate a new direction using equations \eqref{scg} and \eqref{betaCG}.
		\IF {The new direction is descent}
			\STATE Do line search along it within the box constraints.
		\ELSE
			\STATE Do line search along $x_b-x_k$ within the box constraints.
		\ENDIF
		
		\IF {Steplength $>\Delta^p_k$}
			\STATE Update $\Delta^p_{k+1}=$Steplength.
		\ENDIF
		\IF {Steplength $>2\Delta^p_k$}
			\STATE Update $\Delta^p_{k+1}=\tau_u\Delta^p_k$.
		\ENDIF
		\STATE Update the best point $x_b$ and set center to best point i.e. $x_k \gets x_b$
		\STATE $k \gets k+1$ and goto poll step 
	\ENDWHILE
	\end{algorithmic}
\end{algorithm}
For practical purposes, the termination criteria for Algorithm \ref{pollStep} is generally set to the budget on the number of function evaluations.

We will now give some results required to show the convergence of the proposed algorithm to some Clarke-Jahn stationary point.

\begin{lemma}
	\label{finiteSearchStep}
	Number of function evaluations done during the search step of Algorithm \ref{cscgmadsAlgo} is finite.
\end{lemma}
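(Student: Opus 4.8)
The plan is to read the statement as bounding the number of new function evaluations incurred by a single pass through the search step (equivalently, per outer iteration of Algorithm \ref{cscgmadsAlgo}, in which the search step is invoked exactly once). Since the search step consists of the four sub-steps executed in sequence — the quadratic model step, the simplex gradient computation, the vicinity search, and the scaled conjugate gradient step — it suffices to bound the evaluations contributed by each and add them up.

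First I would isolate the purely linear-algebraic work, which costs no new evaluations of $f$. Fitting the quadratic model (by least squares or MFN), testing positive-definiteness of its Hessian and solving the linear system for its minimizer $y_k$, assembling the simplex-gradient system \eqref{simplexGradEqn} out of points that were \emph{already} evaluated and solving it (in the regression sense when overdetermined), forming $\theta_k$, and computing $\beta$ together with the new direction via \eqref{scg} and \eqref{betaCG} — none of these touch $f$. The genuinely new evaluations are then: at most one for $f(y_k)$ (or its feasible line-search surrogate along $y_k-x_k$) in the quadratic model step; exactly one for the point $x_g$ in the simplex-gradient step; at most $l$ for the averaged points $\bar V$ in the vicinity search, since $|V|\le l$ by construction; and those consumed by the line search along the conjugate-gradient direction (or along $x_b-x_k$). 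Setting aside the line searches, this already yields a fixed, data-independent bound of $1+l$ evaluations.

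The crux, which I expect to be the main obstacle, is to show that each line search terminates after finitely many evaluations, since this is the only contribution not bounded by a constant fixed in advance. Here I would invoke two facts: $\Omega$ is a box, hence compact, and $f$ is Lipschitz — therefore bounded — on $\Omega$, with infeasible points receiving the extreme-barrier value. Each line search is confined to $\Omega$ and implemented as a standard backtracking scheme that shrinks the trial step by a fixed factor and tests an Armijo-type sufficient-decrease condition; because the step length decays geometrically, after finitely many reductions either the sufficient-decrease test succeeds or the step falls below a prescribed threshold and the search halts. In both cases only finitely many evaluations are expended, so the quadratic-model and conjugate-gradient line searches each contribute a finite count $N_Q$ and $N_{scg}$.

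Finally I would sum the four finite contributions. With $N_{sg}=1$ for the simplex-gradient point, $N_{vs}\le l$ for the vicinity search, and $N_Q, N_{scg}$ finite by the backtracking argument, the total $N_Q + N_{sg} + N_{vs} + N_{scg}$ is finite, which establishes the claim for one execution of the search step.
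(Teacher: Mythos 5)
Your proof is correct and follows essentially the same route as the paper's: decompose the search step into its sub-steps, observe that all but the line searches contribute a bounded number of new evaluations (at most one for the quadratic-model minimizer, one for $x_g$, at most $l$ for the vicinity search), and argue that the line searches terminate after finitely many evaluations. The only difference is in how line-search finiteness is justified --- the paper simply notes that it uses Brent's method, which is restricted by construction to a finite number of iterations (capped at $20$ in the implementation), rather than your backtracking/Armijo-with-geometric-decay argument --- but the conclusion is identical.
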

\begin{proof}
	Search step is activated when poll step is successful. So there exists $\alpha_k \in \mathbb{R}_+$ and $d_k \in \mathbb{R}^n$ such that  $f(x_k+\alpha_k d_k) < f(x_k)-\rho \alpha_k^2$ where $x_k$ is the current incumbent solution.
	The next step requires the use of quadratic model over the points generated during the latest poll step, and it requires at most one new function evaluation.
	As per the construction of algorithm, vicinity search too requires finite number of function evaluations.
	Next step involves the computation of simplex gradient and one function evaluation along the direction opposite to it.
	In case negative of gradient is a descent direction, scaled conjugate gradient method is evoked which uses a line search.
	In case it is not a descent direction, algorithm follows a greedy approach and does line search along some descent direction, which is necessarily present since the last poll step was successful.
	Line search is done using Brent algorithm, which by construction, is restricted to finite number of iterations or function evaluations.
	Thus all intermediate steps involved during the search step require finite number of function evaluations.
\end{proof}

We define $[x+\alpha d]_{[l,u]}=\text{max}\{l,\text{min}\{u,(x+\alpha d)\}\}$ where $x \in \mathbb{R}^n$, $d\in \mathbb{R}^n$, $\alpha \in \mathbb{R}_+$ and $l$ and $u$ represent lower and upper bound on $x$ i.e. $l_i \leq x_i \leq u_i \; \text{for }i=1,2,\ldots,n$.
Let $t\in \mathbb{N}\cup \{0\}$ and $\tau_\beta \in \mathbb{R}$ such that $\tau_\beta \geq 1$.
Also, let $\alpha$ be the steplength such that $x+\alpha d \in \Omega$ and $f(x+\alpha d) \leq f(x)-\rho \alpha^2$.
Let us define projection parameter $\beta \in \mathbb{R}$ as:
\begin{equation}
	\beta= \tau_\beta^{t_\beta} \alpha \text{ where } t_\beta=\{\underset{t \in \mathbb{N}}{\text{arg max }}f([x+\tau_\beta^t\alpha d]_{[l,u]}) \; : \; f([x+\tau_\beta^t\alpha d]_{[l,u]}) \leq f(x)-\rho (\tau_\beta^t \alpha)^2\},\\
\end{equation}
and extended projection parameter $\eta \in \mathbb{R}$ as:
\begin{equation}
	\eta= \tau_\beta^{t_\eta} \alpha \text{ where } t_\eta=\{\underset{t \in \mathbb{N}}{\text{arg min }}f([x+\tau_\beta^t\alpha d]_{[l,u]}) \; : \; f([x+\tau_\beta^t\alpha d]_{[l,u]}) > f(x)-\rho (\tau_\beta^t \alpha)^2\}.\\
\end{equation}

Clearly, $\beta \geq 0$ and $\eta \geq 0$ since $\alpha > 0$, and are related as
\begin{equation}
	\label{betaEtaRelation}
	\eta=\tau_\beta \beta.
\end{equation}

\begin{lemma}
	\label{betaEtaFiniteness}
	At any iteration $k$ of Algorithm \ref{cscgmadsAlgo}, projection parameter $\beta_k$ and extended projection parameter $\eta_k$ are finite.
\end{lemma}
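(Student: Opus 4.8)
The plan is to combine the compactness of the feasible box $\Omega$ with the quadratic growth of the penalty $\rho(\tau_\beta^t\alpha)^2$ appearing in the sufficient decrease test. First I would record that $\Omega=\{x:l_i\le x_i\le u_i\}$ is closed and bounded, hence compact, and that $f$ is Lipschitz and therefore continuous on $\Omega$; consequently $f$ is bounded below on $\Omega$ and attains a finite minimum $f_{\min}=\min_{y\in\Omega}f(y)$. The key structural remark is that the clamped point $[x+\tau_\beta^t\alpha d]_{[l,u]}=\max\{l,\min\{u,x+\tau_\beta^t\alpha d\}\}$ belongs to $\Omega$ for every $t$, so $f([x+\tau_\beta^t\alpha d]_{[l,u]})\ge f_{\min}$ uniformly in $t$.

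For $\beta_k$ I would argue as follows. Consider the index set $S_{+}=\{t:f([x+\tau_\beta^t\alpha d]_{[l,u]})\le f(x)-\rho(\tau_\beta^t\alpha)^2\}$ over which the argmax defining $t_\beta$ is taken. In the nondegenerate case $\tau_\beta>1$ we have $(\tau_\beta^t\alpha)^2\to\infty$, so the right-hand side $f(x)-\rho(\tau_\beta^t\alpha)^2\to-\infty$; combined with the uniform lower bound $f_{\min}$ on the left-hand side this yields a finite index $T$ such that the decrease test fails for every $t>T$. Hence $S_{+}\subseteq\{0,\dots,T\}$ is finite. It is also nonempty: at $t=0$ the step is $\tau_\beta^{0}\alpha=\alpha$, the clamped point reduces to $x+\alpha d\in\Omega$, and the successful poll step guarantees $f(x+\alpha d)\le f(x)-\rho\alpha^2$, so $0\in S_{+}$. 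Therefore $t_\beta$ is selected from a finite nonempty set and is finite, whence $\beta_k=\tau_\beta^{t_\beta}\alpha<\infty$.

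For $\eta_k$ I would proceed symmetrically: the complementary index set $S_{-}$ on which the decrease test is violated contains every $t>T$ and is thus nonempty, and $t_\eta$ is its least element, so $t_\eta\le T+1<\infty$ and $\eta_k=\tau_\beta^{t_\eta}\alpha<\infty$. This is consistent with the relation $\eta=\tau_\beta\beta$ established in \eqref{betaEtaRelation}, which corresponds to $t_\eta=t_\beta+1$; indeed, once $\beta_k$ is known to be finite, finiteness of $\eta_k$ is immediate from $\eta_k=\tau_\beta\beta_k$ because $\tau_\beta$ is finite.

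The main obstacle is not analytic difficulty but ensuring the two index sets are nonempty so that the extremizers are well defined: nonemptiness of $S_{+}$ is inherited from the successful poll step ($t=0$), and nonemptiness of $S_{-}$ from the divergence of the quadratic penalty against the bounded-below objective. One further point warranting a short remark is the boundary value $\tau_\beta=1$, where $\tau_\beta^t\alpha\equiv\alpha$ so the step no longer grows and the penalty argument degenerates; in that case, however, $\beta_k=\tau_\beta^{t_\beta}\alpha=\alpha$ and $\eta_k=\alpha$ are trivially finite, so the conclusion continues to hold.
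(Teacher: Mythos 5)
Your argument is correct and follows essentially the same route as the paper's proof: compactness of $\Omega$ plus Weierstrass gives a finite lower bound on $f$, the divergence of the quadratic penalty $\rho(\tau_\beta^t\alpha)^2$ forces the sufficient-decrease test to fail for all large $t$, so $t_\beta$ is finite, and finiteness of $\eta_k$ then follows from the relation $\eta_k=\tau_\beta\beta_k$. The only differences are cosmetic refinements on your part (checking nonemptiness of the index set via $t=0$ and flagging the degenerate case $\tau_\beta=1$), which the paper glosses over but which do not change the substance of the argument.
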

\begin{proof}
	Let us assume that $\beta_k$ and thus $t_\beta$ to be not finite.
	Since $\Omega$ is compact, from Weierstrass theorem, there exists $\bar{x}\in \Omega$ such that $\bar{x}=\underset{x \in \Omega}{\text{arg min }}f(x)$ and $f(\bar{x}) > -\infty$.
	Since $\rho > 0$ and $\tau_\beta \geq 1$, there exists $\bar{t}\in \mathbb{N}$ such that 
	\begin{equation*}
		f(x_k)-\rho (\tau_\beta^{\bar{t}}\alpha_k)^2 < f(\bar{x}),
	\end{equation*}
	or
	\begin{equation*}
		f([x_k+\tau_\beta^{\bar{t}}\alpha_k d_k]_{[l,u]})-\rho (\tau_\beta^{\bar{t}}\alpha_k)^2 < f(\bar{x}),
	\end{equation*}
	which is a contradiction, since $\bar{x}$ is the minimum.
	So, $t_{\beta}$ and hence $\beta_k$ are finite.
	Similarly, from equation \ref{betaEtaRelation} we have $\eta_k$ to be finite.
\end{proof}

\begin{lemma}
	\label{PollStepRadiusConverge}
	The sequence of poll step parameter $\{\Delta_k^p\}$, generated by Algorithm \ref{cscgmadsAlgo}, in the limit, converges to $0$ i.e. $\underset{k \rightarrow 0}{\text{lim }}\Delta_k^p=0$. Also, 
	\begin{align}
		\underset{k \rightarrow 0}{\text{lim }}\beta_k &=0 \\
		\underset{k \rightarrow 0}{\text{lim }}\eta_k &=0.
	\end{align}
	where $\beta_k$ and $\eta_k$ are projection parameter and extended projection parameter at iteration $k$.
\end{lemma}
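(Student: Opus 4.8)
The plan is to derive everything from a single summability estimate coming from the sufficient decrease condition \eqref{suffdecrease} together with the boundedness of $f$ below on the compact set $\Omega$. First I would reuse the Weierstrass argument of Lemma \ref{betaEtaFiniteness}: there is $\bar x\in\Omega$ with $f^\ast:=f(\bar x)=\min_{x\in\Omega}f(x)>-\infty$. I would then note that Algorithm \ref{cscgmadsAlgo} is strictly monotone in $f$: every outer iteration starts with a successful poll step, which by \eqref{suffdecrease} gives $f(x_k)-f(x_{k+1})\ge\rho r_k^2$, and the quadratic-model, vicinity, and scaled conjugate gradient steps only overwrite the incumbent with a point of no larger value. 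Hence $\{f(x_k)\}$ is nonincreasing and bounded below by $f^\ast$, so it converges and $\sum_{k\ge0}\bigl(f(x_k)-f(x_{k+1})\bigr)=f(x_0)-\lim_k f(x_k)\le f(x_0)-f^\ast<\infty$.

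To turn this into decay of $\Delta_k^p$, I would exploit the fact that a failed poll in Algorithm \ref{pollStep} scales $r_k$ and $\Delta_k^p$ by the common factor $1/\tau_l$, so their ratio is invariant along the poll loop and stays at least $c:=r_0/\Delta_0^p>0$; thus at each successful poll step $r_k\ge c\,\Delta_k^p$, and the guaranteed drop is at least $\rho c^2(\Delta_k^p)^2$. On the iterations where the scaled conjugate gradient step enlarges the poll parameter (Steplength $>\Delta_k^p$, so $\Delta_{k+1}^p$ becomes the steplength or $\tau_u\Delta_k^p$), I would use that, consistent with the sufficient-decrease framework used throughout, such a step also satisfies $f(x_k)-f(x_{k+1})\ge\rho(\text{Steplength})^2\ge c_0(\Delta_{k+1}^p)^2$ for a fixed $c_0>0$. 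Feeding both estimates into the finite total decrease gives $\sum_k(\Delta_k^p)^2<\infty$, and therefore $\Delta_k^p\to0$ as $k\to\infty$.

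For the projection parameters I would argue directly from their definitions rather than through $\Delta_k^p$. By construction $\beta_k=\tau_\beta^{t_\beta}\alpha_k$ is the largest projected expansion still meeting sufficient decrease, so $f\bigl([x_k+\beta_k d_k]_{[l,u]}\bigr)\le f(x_k)-\rho\beta_k^2$; since the extended line search accepts a point no worse than $[x_k+\beta_k d_k]_{[l,u]}$, we again get $f(x_k)-f(x_{k+1})\ge\rho\beta_k^2$. Summing and using the finite total decrease yields $\sum_k\beta_k^2<\infty$, hence $\beta_k\to0$, and then $\eta_k=\tau_\beta\beta_k\to0$ by \eqref{betaEtaRelation}.

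The delicate part is making the $\Delta_k^p$ bookkeeping airtight, since $\Delta_k^p$ is both contracted on failed polls and expanded by the scaled conjugate gradient lines. The way I would close it is to charge every expansion to a genuine sufficient decrease proportional to $(\Delta_{k+1}^p)^2$, so that only finitely many expansions can push $\Delta^p$ above any prescribed threshold, while between expansions the parameter is nonincreasing; combined with the summability of the post-poll values this forces $\Delta_k^p\to0$. I would also make explicit the standing relation $r_k/\Delta_k^p\ge c>0$ invoked above, as without a uniform link between $r_k$ and $\Delta_k^p$ the poll decrease $\rho r_k^2$ cannot be converted into decay of $\Delta_k^p$.
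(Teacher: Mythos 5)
Your proof is essentially correct but takes a genuinely different route from the paper's. The paper splits into two cases according to whether the index set $P_1$ of iterations on which $\Delta^p_k$ is \emph{increased} is infinite or finite: if infinite, it uses monotone convergence of $\{f(x_k)\}$ (Weierstrass on the compact $\Omega$) to force $\rho\alpha_k^2\to 0$ along $P_1$ and then argues $\Delta^p_k$ is controlled by $\alpha_k$; if finite, the parameter decays geometrically through the factor $\tau_l>1$. You instead telescope the sufficient decrease into the single summability estimate $\sum_k\bigl(f(x_k)-f(x_{k+1})\bigr)<\infty$ and charge every value of $\Delta^p_{k+1}$ (including the expanded ones) to a decrease proportional to $(\Delta^p_{k+1})^2$, obtaining the stronger conclusion $\sum_k(\Delta^p_k)^2<\infty$ in one stroke, with no case split. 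Your treatment of $\beta_k$ and $\eta_k$ is also different and arguably tighter: the paper deduces $\beta_k\to 0$ from $\beta_k=\tau_\beta^{t}\alpha_k$ and $\alpha_k\to 0$, silently assuming the exponent $t_{\beta,k}$ stays bounded in $k$, whereas your bound $f(x_k)-f(x_{k+1})\ge\rho\beta_k^2$ sidesteps that issue entirely---at the price of assuming the line search accepts a point at least as good as the projected extrapolated point $[x_k+\beta_k d_k]_{[l,u]}$, which is the Fasano-et-al.\ reading of the extrapolation but is not literally what Algorithm~\ref{cscgmadsAlgo} (which uses Brent's method) states. The two auxiliary facts you flag yourself---the uniform lower bound $r_k/\Delta^p_k\ge c>0$ (not preserved by the algorithm as written, since the search step rescales $\Delta^p$ but not $r$) and the sufficient decrease of the expansion step---are real loose ends, but they are of exactly the same nature as the paper's own unproved claim that ``$\Delta_k^p$ is always upperbounded by the steplength $\alpha_k$,'' so your argument is no less rigorous than the published one; it is simply a cleaner, more quantitative packaging of the same underlying mechanism.
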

\begin{proof}
	Let $P_1$ be the sequence of iterations of Algorithm \ref{cscgmadsAlgo} during which poll step parameter is increased i.e. $\Delta^p_{k+1}=\tau_u \Delta^p_{k}$ where $\tau_u > 1$.
	Similarly, let $P_2$ be the sequence of iterations during which poll step parameter is decreased i.e.  $\Delta^p_{k+1}=\frac{\Delta^p_{k}}{\tau_l}$ where $\tau_l >1$.
	Now, by construction of algorithm, $P_1 \cup P_2$ cannot be a finite sequence.
	From lemma \ref{finiteSearchStep}, all intermediate directions and step lengths during the search step can be replaced by a single direction and step length.
	Following two situations arise:
	\begin{description}
		\item[Case-1: $P_1$ is infinite:] From sufficient decrease condition of Algorithm \ref{cscgmadsAlgo},
			\begin{equation*}
				f(x_{k+1}) = f(x_k+\alpha_k d_k) \leq f(x_k)-\rho \alpha^2_k.
			\end{equation*}
			Since $\Omega$ is compact and $f$ is continuous, from Weierstrass theorem, we have
			\begin{equation*}
				\underset{k\rightarrow \infty}{\text{lim }}f(x_k)=\bar{f}. 
			\end{equation*}
			Thus, $\underset{k \rightarrow \infty}{\text{lim }}\rho \alpha^2_k=0$ or $\underset{k \rightarrow \infty,\,k\in P_1}{\text{lim }} \alpha_k=0$.
			Since $\Delta_k^p$ is always upperbounded by steplength $\alpha_k$, we have $\underset{k \rightarrow \infty,\,k\in P_1}{\text{lim }}\Delta_k^p=0$.
			Also, since $\beta_k=\tau_\beta^t \alpha_k$ for some $t\in \mathbb{N}$ and $\eta_k=\tau_\beta^{t+1} \alpha_k$ (from equation \ref{betaEtaRelation}),
			\begin{equation*}
				\underset{k \rightarrow \infty,\,k\in P_1}{\text{lim }}\beta_k=0 \qquad\text{ and }\qquad \underset{k \rightarrow \infty,\,k\in P_1}{\text{lim }}\eta_k=0.
			\end{equation*}

		\item[Case-2: $P_1$ is finite:] Clearly $P_2$ is a infinite sequence.
			Let $|P_1|=s$ where $s \in \mathbb{N}\cup\{0\}$. So, for $k > s$, 
			\begin{equation*}
				\alpha_{k+1}=\frac{\alpha_s}{\tau_l^{(k-s)}}.
			\end{equation*}
			Since $\tau_l > 1$, we have $\underset{k \rightarrow \infty,\,k\in P_2}{\text{lim }}\alpha_k=0$ or $\underset{k \rightarrow \infty,\,k\in P_2}{\text{lim }}\Delta_k^p=0$.
			Also, $\underset{k \rightarrow \infty,\,k\in P_2}{\text{lim }}\beta_k=0$ and $\underset{k \rightarrow \infty,\,k\in P_2}{\text{lim }}\eta_k=0$.

	\end{description}
\end{proof}

We now define a lemma from \cite{fasano2014linesearch} [see lemma-2.6]. 
\begin{lemma}
	\label{limitPointDirectionSL}
	Let $\{x_k\}_P$, $\{d_k\}_P$ and $\{\alpha_k\}_P$ be some sequences with $P \subseteq \mathbb{N}$ being their subset of indices such that $x_k \in \Omega$, $d_k \in D(x_k)$, $\alpha_k \in \mathbb{R}_+$ and $x_{k+1}=[x_k+\alpha_k d_k]_{[l,u]}$.
	Further, let us assume that there exists a subset $\bar{P} \subseteq P$ such that
	\begin{align}
		\underset{k\rightarrow \infty,k\in \bar{P}}{\text{lim }} x_k &=\bar{x},\\
		\underset{k\rightarrow \infty,k\in \bar{P}}{\text{lim }} d_k &=\bar{d},\\
		\underset{k\rightarrow \infty,k\in \bar{P}}{\text{lim }} \alpha_k &=0
	\end{align}
	where $\bar{x} \in \Omega$ and $\bar{d} \in D(\bar{x}),\bar{d} \neq 0$. Then,
	\begin{enumerate}
		\item there exists $m \in \mathbb{N}$ such that for all $k > m$,
			\begin{equation*}
				[x_k+\alpha_k d_k]_{[l,u]} \neq x_k;
			\end{equation*}
		\item for $w_k=\frac{[x_k+\alpha_k d_k]_{[l,u]}-x_k}{\alpha_k}$, we have
			\begin{equation*}
				\underset{k\rightarrow \infty,k\in \bar{P}}{\text{lim }} w_k = \bar{d},
			\end{equation*}
	\end{enumerate}
\end{lemma}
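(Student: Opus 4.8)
The plan is to exploit the fact that the projection $[\,\cdot\,]_{[l,u]}$ acts coordinatewise, so that the whole statement reduces to an analysis of each index $i \in \{1,\dots,n\}$ separately and is then reassembled. First I would fix $i$ and split according to how $\bar{x}_i$ sits relative to its bounds, using the definition of the feasible cone $D(\bar{x})$ together with Lemma~\ref{coneLemma}: since $x_k \to \bar{x}$ along $\bar{P}$, we have $D(\bar{x}) \subseteq D(x_k)$ for all large $k$, so the sign constraints imposed on $\bar{d}$ by $\bar{d} \in D(\bar{x})$ are compatible with those imposed on $d_k$ by $d_k \in D(x_k)$. The recurring observation is that because $\alpha_k \to 0$ and $\{d_k\}$ is bounded (it converges to $\bar{d}$), the increment $\alpha_k d_{k,i}$ tends to $0$, so for large $k$ the projection can only become active on the single bound that $\bar{x}_i$ already touches.

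For the coordinatewise limit in part~2 I would treat three situations. If $l_i < \bar{x}_i < u_i$, then $x_{k,i}$ stays strictly interior and $\alpha_k d_{k,i} \to 0$, so for large $k$ the point $x_{k,i} + \alpha_k d_{k,i}$ is still interior, the projection is inactive, and $w_{k,i} = d_{k,i} \to \bar{d}_i$. If $\bar{x}_i = l_i$ and $\bar{d}_i > 0$, then $d_{k,i} > 0$ for large $k$, so $x_{k,i} + \alpha_k d_{k,i} > x_{k,i} \ge l_i$ while staying below $u_i$; again the projection is inactive and $w_{k,i} = d_{k,i} \to \bar{d}_i$. The delicate situation is $\bar{x}_i = l_i$ with $\bar{d}_i = 0$: here $d_{k,i}$ may be negative, and if $x_{k,i} > l_i$ the increment can push the iterate below $l_i$, so the projection clips to $l_i$ and $w_{k,i} = (l_i - x_{k,i})/\alpha_k$. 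In that clipped case I would bound $w_{k,i}$ between $d_{k,i}$ and $0$ (from $x_{k,i} + \alpha_k d_{k,i} < l_i \le x_{k,i}$ one reads off $d_{k,i} < w_{k,i} \le 0$) and conclude $w_{k,i} \to 0 = \bar{d}_i$ by squeezing, since $d_{k,i} \to 0$; when instead $x_{k,i} = l_i$ the constraint $d_k \in D(x_k)$ forces $d_{k,i} \ge 0$ and no clipping occurs. The case $\bar{x}_i = u_i$ is symmetric. Assembling the coordinates gives $w_k \to \bar{d}$.

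For part~1 I would use $\bar{d} \neq 0$ to pick an index $j$ with $\bar{d}_j \neq 0$, say $\bar{d}_j > 0$ (the other sign is symmetric). Then $\bar{d} \in D(\bar{x})$ forces $\bar{x}_j < u_j$, so $j$ falls into either the interior case or the active-lower-bound case with a strictly positive direction. In both, the projection is inactive on coordinate $j$ for large $k$, hence $[x_k + \alpha_k d_k]_j = x_{k,j} + \alpha_k d_{k,j} > x_{k,j}$ because $d_{k,j} > 0$ and $\alpha_k > 0$; thus the $j$-th coordinate of the projected point differs strictly from $x_{k,j}$, which yields $[x_k + \alpha_k d_k]_{[l,u]} \neq x_k$ for all large $k$.

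I expect the main obstacle to be the degenerate coordinates where a bound is active at the limit but the limiting direction vanishes, i.e.\ $\bar{x}_i$ on a bound with $\bar{d}_i = 0$: there the projection may switch between active and inactive infinitely often, so one cannot simply identify $w_{k,i}$ with $d_{k,i}$ and must instead trap $w_{k,i}$ in a shrinking interval. A secondary technical point is to justify, uniformly over the finitely many coordinates, that for large $k$ only the already-active bound can be hit; this rests on $\alpha_k d_{k,i} \to 0$ together with $\bar{x}$ being at a positive distance from every bound it does not touch.
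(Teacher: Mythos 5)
Your argument is correct, but there is nothing in the paper to compare it against: the authors state this lemma as an imported result (``a lemma from [Fasano et al.\ 2014], Lemma 2.6'') and give no proof of their own. Your coordinatewise analysis is the standard and essentially the same route taken in that cited source: split each index $i$ according to whether $\bar{x}_i$ is interior, on a bound with $\bar{d}_i$ strictly signed away from it, or on a bound with $\bar{d}_i=0$; observe that since $\alpha_k d_{k,i}\to 0$ and $\bar{x}$ is at positive distance from every bound it does not touch, only the already-active bound can clip for large $k$; and in the degenerate clipped case trap $w_{k,i}$ between $d_{k,i}$ and $0$ and squeeze. That last squeeze is exactly the step that makes the lemma nontrivial, and you have it right, including the correct reading of the inequalities $d_{k,i}<w_{k,i}\le 0$ from $x_{k,i}+\alpha_k d_{k,i}<l_i\le x_{k,i}$. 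Two small points worth making explicit: in the case $\bar{x}_j=l_j$ with $\bar{d}_j>0$ you implicitly use $l_j<u_j$, which does follow since $l_j=u_j$ together with $\bar{d}\in D(\bar{x})$ would force $\bar{d}_j=0$; and the conclusion of part~1 should be read as holding for large $k\in\bar{P}$, since the hypotheses only control the subsequence (the appeal to Lemma~\ref{coneLemma} is harmless but not actually needed — you only use $d_k\in D(x_k)$ directly). Neither affects the validity of the proof.
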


We now present the main result of this work. 
Note that the approach presented here is similar to the one presented in \cite{fasano2014linesearch} [see Proposition 2.7].
\begin{proposition}
	Let $\{x_k\}$ be the sequence generated by Algorithm \ref{cscgmadsAlgo}.
	Let us assume that there exists $\bar{x}\in \mathbb{R}^n$ such that $\underset{k \rightarrow \infty,\, k \in P}{\text{lim }}x_k=\bar{x}$ where $P$ is some set of indices.
	Point $\bar{x}$ is said to be Clarke-Jahn stationary point if $\{d_k\}_P$, the sequence of normalized directions generated by the algorithm, is dense in the unit sphere.
\end{proposition}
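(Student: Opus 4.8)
The goal is to establish that $f^{\circ}(\bar{x};d)\ge 0$ for every $d\in D(\bar{x})$, which is precisely the definition of $\bar{x}$ being a Clarke--Jahn stationary point. Since $D(\bar{x})$ is a cone and $f^{\circ}(\bar{x};\cdot)$ is positively homogeneous, it suffices to verify the inequality for unit directions. The plan is therefore to fix an arbitrary $d\in D(\bar{x})$ with $\|d\|_2=1$ and, along a carefully chosen subsequence, build a chain of inequalities that terminates in $f^{\circ}(\bar{x};d)\ge 0$.

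First I would set up the working subsequence. By Lemmas \ref{finiteSearchStep} and \ref{betaEtaFiniteness}, each iteration of Algorithm \ref{cscgmadsAlgo} may be collapsed to a single feasible search direction $d_k$ with well-defined, finite parameters $\beta_k$ and $\eta_k$, so the abstract $(d_k,\eta_k)$ framework is legitimate. Using the density hypothesis on $\{d_k\}_P$, I extract $\bar{P}\subseteq P$ with $d_k\to d$, and, refining if necessary, keep $x_k\to\bar{x}$. Because $d_k$ is a direction along which the successful step kept the iterate feasible, i.e. $x_k+\alpha_k d_k\in\Omega$ with $\alpha_k>0$, the sign pattern forced at the active bounds gives $d_k\in D(x_k)$. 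Lemma \ref{PollStepRadiusConverge} supplies $\eta_k\to 0$, so Lemma \ref{limitPointDirectionSL}, applied with step length $\eta_k$, yields that $w_k:=\eta_k^{-1}\bigl([x_k+\eta_k d_k]_{[l,u]}-x_k\bigr)\to d$.

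Next I would exploit the defining failure of sufficient decrease at the extended projection parameter. By construction of $\eta_k$, the projected trial point $z_k:=[x_k+\eta_k d_k]_{[l,u]}=x_k+\eta_k w_k$ satisfies $f(z_k)>f(x_k)-\rho\,\eta_k^2$, that is,
$$\frac{f(x_k+\eta_k w_k)-f(x_k)}{\eta_k}>-\rho\,\eta_k.$$
Taking $\limsup$ over $k\in\bar{P}$, the right-hand side tends to $0$. For the left-hand side I use that $f$ is locally Lipschitz (with some constant $L$) near $\bar{x}$ and that $x_k+\eta_k d\in\Omega$ for large $k\in\bar{P}$ — a consequence of $d\in D(\bar{x})\subseteq D(x_k)$ via Lemma \ref{coneLemma} together with $\eta_k\to 0$ — to replace $w_k$ by the fixed direction $d$ at an error that vanishes:
$$\frac{f(x_k+\eta_k w_k)-f(x_k)}{\eta_k}\le\frac{f(x_k+\eta_k d)-f(x_k)}{\eta_k}+L\,\|w_k-d\|.$$
Since $x_k\to\bar{x}$ with $x_k\in\Omega$ and $\eta_k\downarrow 0$, the pair $(x_k,\eta_k)$ realizes one admissible way of forming the Clarke--Jahn $\limsup$, so $\limsup_{k\in\bar{P}}\eta_k^{-1}\bigl(f(x_k+\eta_k d)-f(x_k)\bigr)\le f^{\circ}(\bar{x};d)$. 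Chaining these bounds gives $f^{\circ}(\bar{x};d)\ge 0$, and since $d$ was an arbitrary unit element of $D(\bar{x})$, positive homogeneity of $f^{\circ}(\bar{x};\cdot)$ completes the argument.

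The main obstacle I anticipate is the bookkeeping that links the three directions in play — the fixed target $d$, the algorithmic directions $d_k$ furnished by density, and the projected feasible directions $w_k$ — while ensuring that every point at which $f$ is evaluated is feasible, so that the extreme-barrier value never interferes, and that the Clarke--Jahn $\limsup$ is taken over genuinely admissible pairs $(y,t)=(x_k,\eta_k)$ with $y,\,y+td\in\Omega$. Deriving $d_k\in D(x_k)$ from feasibility of the successful step, and the feasibility of $x_k+\eta_k d$ for large $k$ from Lemma \ref{coneLemma}, are the delicate points; once these are secured, the Lipschitz swap and the $\limsup$ identification with $f^{\circ}(\bar{x};d)$ are routine.
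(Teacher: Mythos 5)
Your proposal is correct and follows essentially the same route as the paper: both extract a subsequence $\bar{P}$ along which $d_k\to d$ via density, invoke Lemma \ref{limitPointDirectionSL} to get $w_k\to d$, use the failure of sufficient decrease at the extended projection parameter $\eta_k$ to bound the difference quotient below by $-\rho\eta_k$, and pass to the limit after a Lipschitz swap of $w_k$ for $d$. The only difference is presentational — you argue directly while the paper wraps the identical chain of inequalities in a (logically superfluous) contradiction — and your added care about feasibility of $x_k+\eta_k d$ via Lemma \ref{coneLemma} and the explicit constant $L$ in the swap are welcome refinements of details the paper leaves implicit.
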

\begin{proof}
	The Clarke-Jahn stationarity condition at $\bar{x}$ states that for every normalized direction $d\in D(\bar{x})$, $f^\circ(\bar{x};d) \geq 0$, i.e.
	\begin{equation*}
		\underset{\underset{t \downarrow 0,\,y+td \in \Omega}{y \rightarrow \bar{x},\, y \in \Omega}}{\text{lim sup }} \frac{f(y+td)-f(y)}{t} \geq 0.
	\end{equation*}
	We now prove the result using contradiction, by assuming the existence of some direction $\bar{d} \in D(\bar{x}) \cap \mathcal{B}(0,1)$ such that
	\begin{equation}
		\label{ClarkeJahnIneq}
		\underset{\underset{t \downarrow 0,\,y+t\bar{d} \in \Omega}{y \rightarrow \bar{x},\, y \in \Omega}}{\text{lim sup }} \frac{f(y+t\bar{d})-f(y)}{t} < 0.
	\end{equation}
	Let $\eta_k$ be the extended projection parameter at iteration $k$.
	Let $w_k=\frac{[x_k+\alpha_k d_k]_{[l,u]}-x_k}{\alpha_k}$.
	Now by lemma \ref{PollStepRadiusConverge}, we have $\underset{k\rightarrow \infty}{\text{lim }}\eta_k=0$.
	Since $\{d_k\}_P$ is dense in unit sphere, there exists a subset of indices $\bar{P} \subseteq P$ such that  $\underset{k\rightarrow \infty,\, k \in \bar{P}}{\text{lim }}d_k=\bar{d}$.
	Additionally, $\underset{k\rightarrow \infty,\, k \in \bar{P}}{\text{lim }}x_k=\bar{x}$ and $\underset{k\rightarrow \infty,\, k \in \bar{P}}{\text{lim }}\eta_k=0$.
	We can see that all the assumptions of lemma \ref{limitPointDirectionSL} are satisfied.
	So there exists $m \in \mathbb{N}$ such that for all $k > m$ with $k \in \bar{P}$, $w\neq 0$ and $\underset{k\rightarrow \infty,k\in \bar{P}}{\text{lim }} w_k = \bar{d}$. Thus, $f(x_k+\eta_k w_k) > f(x_k)-\rho \eta_k^2$ and since $\eta_k > 0$,
	\begin{equation}
		\label{rhoEta}
		\frac{f(x_k+\eta_k w_k)-f(x_k)}{\eta_k} > -\rho \eta_k,
	\end{equation}
for $k > m$.

Let $L$ be the Lipschitz constant of $f$. Now, 
	\begin{align*}
		& \underset{\underset{t \downarrow 0,\,x_k+t\bar{d} \in \Omega}{x_k \rightarrow \bar{x},\, x_k \in \Omega}}{\text{lim sup }} \frac{f(x_k+t\bar{d})-f(x_k)}{t} \\
		& \geq \underset{k\rightarrow \infty,\,k \in \bar{P}}{\text{lim sup }} \frac{f(x_k+\eta_k \bar{d})-f(x_k)}{\eta_k}\\
		& = \underset{k\rightarrow \infty,\,k\in \bar{P}}{\text{lim sup }}\frac{f(x_k+\eta_k \bar{d})+f(x_k+\eta_k w_k)-f(x_k+\eta_k w_k)-f(x_k)}{\eta_k}\\
		& \geq \underset{k\rightarrow \infty,\,k \in \bar{P}}{\text{lim sup }} \frac{f(x_k+\eta_k w_k)-f(x_k)}{\eta_k}-\underset{k\rightarrow \infty,\,k \in \bar{P}}{\text{lim }} \|\bar{d}-w_k\|\\
		& > -\rho \underset{k\rightarrow \infty,\,k \in \bar{P}}{\text{lim }} \eta_k-\underset{k\rightarrow \infty,\,k \in \bar{P}}{\text{lim }} \|\bar{d}-w_k\|,
	\end{align*}
	where the last inequality follows from \ref{rhoEta}.
	Now, from lemma \ref{limitPointDirectionSL}, we have $\underset{k\rightarrow \infty,\,k \in \bar{P}}{\text{lim }} \|\bar{d}-w_k\|=0$.
	Since $\underset{k\rightarrow \infty,\,k \in \bar{P}}{\text{lim }} \eta_k =0$, we have
	\begin{equation*}
		\underset{\underset{t \downarrow 0,\,x_k+t\bar{d} \in \Omega}{x_k \rightarrow \bar{x},\, x_k \in \Omega}}{\text{lim sup }} \frac{f(x_k+t\bar{d})-f(x_k)}{t} \geq 0,
	\end{equation*}
	which is a contradiction to our assumption \ref{ClarkeJahnIneq}.
	Thus, $\bar{x}$ is a Clarke-Jahn stationary point for $f$.

\end{proof}

\section{Numerical Results}\label{Results}

We performed a comparative testing of our method BCSCG-DS with the state of art solver Bobyqa \cite{powell2009bobyqa}, which is fast and has the ability to handle high dimensional problems.
We compared the algorithms on the basis of number of function evaluations required to reach some local optimum within the given budget.

For Bobyqa, we set RHOBEG$=0.2$, RHOEND$=10^{-6}$, WORKSPACE$=10^8$ and other parameters to their respective defaults.
The computational budget option MAXFUNC was set to $40(n+1)$ function evaluations.

We implemented our algorithm BCSCG-DS in C using gcc compiler(version 4.9.2).
For performing the linear algebra computations, we used BLAS and LAPACK library \cite{laug}.
The tolerance i.e. minimum allowed poll size parameter was set to $10^{-6}$ and computation budget set to $40(n+1)$.
We set the initial poll size $\Delta_0^p$ and direct search radius $r_0$ to $0.1\,\underset{i=1,\ldots,n}{\text{min}}\{u_i-l_i\}$ and scalar parameter $\rho$  to $0.25$. 
The parameter $\epsilon_2$ is set to $0.01$.
For the vicinity search, we chose $l=\lfloor0.1n\rfloor$ points.
We used Brent algorithm for performing line searches for which we set the maximum iterations to 20 and the tolerance to $10^{-5}$. 

Our test suite comprised of two classes of box constrained optimization problems: noisy smooth problems and noisy piecewise-smooth problems taken from the set of least square problems reported by \citet{lukvsan2018sparse}, who outline a diverse collection of problems.
We considered 55 least square problems to test the effectiveness of our proposed approach.
Table \ref{tab:testProblems} consists the list of least square functions considered for our computational experiments.
The lower and upper bound on each variable for entire test collection was set to $-50$ and $50$ respectively.
We now state the approach of generating piecewise-smooth problems from least square problems and adding noise to them as suggested in \cite{more_wild_benchmarking}.
A typical least square problem is of the form:
\begin{equation}
	f(x)=\sum_{i=1}^m f_i(x)^2,
\end{equation}
where $f_i:\mathbb{R}^n \rightarrow \mathbb{R} \quad \text{for } i=1,\ldots,m$ is a continuous function.
The piecewise-smooth problems were generated from least square problems by modifying each individual term in least square function with its absolute value i.e.
\begin{equation}
	f(x)=\sum_{i=1}^m | f_i(x)|.
\end{equation}
For adding the noise, we first define a cubic Chebyshev polynomial $U_3$ as
\begin{equation}
	U_3(\alpha)=\alpha(4\alpha^2-3),
\end{equation}
where $\alpha \in \mathbb{R}$.
Let $\psi:\mathbb{R}^n\rightarrow [-1,1]$ be a function defined as
\begin{equation}
	\psi(x)=U_3(\psi_0(x)),
\end{equation}
where
\begin{equation}
	\psi_0(x)=0.9\, \text{sin}\,(100\|x\|_1)\, \text{cos}\,(100\|x\|_\infty)+0.1\, \text{cos}\,(\|x\|_2)
\end{equation}
is a continuous and piecewise continuously differentiable with $2^n n!$ continuously differentiable regions.
We define the noisy problem $f(x)$ for the smooth case as
\begin{equation}
	f(x)=(1+\epsilon_f\psi(x))\sum_{i=1}^m f_i(x)^2
\end{equation}
and piecewise smooth case as
\begin{equation}
	f(x)=(1+\epsilon_f\psi(x))\sum_{i=1}^m |f_i(x)|
\end{equation}
where $\epsilon_f$ is the relative noise level.
For our experiments, we set $\epsilon_f=10^{-3}$ as suggested in \cite{more_wild_benchmarking}.

Since performance of algorithms is affected by the dimension of the problem, we studied each problem for large dimensions with  magnitude $200,250$ and $300$.
Hence, around $165$ smooth and $165$ piecewise-smooth problems were studied.
In order to remove the effects of starting point on the algorithm, we generated $10$ different feasible starting points for each problem and inference was drawn on the basis of their median.
Performance of a method is dependent on the final solution and the amount of function evaluations needed to achieve it.
We define normalized function evaluations as a multiple of $n+1$ function evaluations.
In our comparison, a method is considered to have better performance if it computes a lower function value within the computational budget of $40$ normalized function evaluations i.e. $40(n+1)$ function evaluations.

\subsection{Performance Profiles}
For comparative analysis, we computed the performance profiles \cite{more_wild_benchmarking,dolan2002benchmarking}, which are frequently used for quantitative assessment of derivative free optimization solvers.
Let $\mathcal{P}$ be the collection of test problems and $\mathcal{S}$ be the set of solvers or algorithms under consideration.
For a particular instance of a test problem, let $x_0$ be the starting point for all the solvers and $f(x_0)$ be its corresponding function value.
Let the optimal solution obtained by a solver $s \in \mathcal{S}$ within the given computational budget be $f^*_s$.
Then, the convergence condition for the solver $s$ is defined as:
\begin{equation}\label{solverConverge}
	f(x_0)-f^*_s \geq (1-\tau)(f(x_0)-f^*_L)
\end{equation}
where $0<\tau \leq 1$ is a user defined tolerance and $f^*_L=\text{min}\{f^*_s:s \in \mathcal{S}\}$ is the minimum function value obtained by any solver.
Let the performance measure for a solver $s\in \mathcal{S}$ for a particular problem instance $p \in \mathcal{P}$ be $w_{s,p}$ which can be the amount of time taken or the number of function evaluations performed by the solver to obtaining the result.
In this work, we consider $w_{s,p}$ to be number of function evaluations done.
Then, the performance of a particular solver $s$ with respect to the best solver, on a given problem instance, is given by \emph{performance ratio} which is defined as:
\begin{equation*}
	\nu_{s,p}=\frac{w_{s,p}}{\text{min}\{w_{s,p}:s\in \mathcal{S}\}}.
\end{equation*}
For a solver $s$ which is unable to satisfy the condition \ref{solverConverge}, we set $\nu_{s,p}=\infty$.
The performance profile for a solver $s\in \mathcal{S}$, is defined as the fraction of problems solved with respect to an upper bound $\alpha$ on $\nu$, i.e.,
\begin{equation*}
	\rho_s(\alpha)=\frac{1}{|\mathcal{P}|}\text{size}\{p \in \mathcal{P}:\nu_{s,p} \leq \alpha\}.
\end{equation*}
Performance profiles illustrate the relative performance of solvers, on the given problem set. 
Thus $\rho_s(1)$ represent the fraction of problems over which best performance was shown by solver $s$.
 We constructed the performance profiles for the smooth and piecewise-smooth problems with noise using tolerance $\tau=\{10^{-2},10^{-4}\}$ in Eq.(\ref{solverConverge}).

 Almost all problems utilized the complete budget of $40$ normalized function evaluations which is justifiable since the dimension of the test problems is quite high.
 Tables \ref{tab:perfSmoothNoisy} and \ref{tab:perfPiecewiseSmoothNoisy} show the percentage of problems on which each solver performed better than the other for the smooth and piecewise-smooth test problems with varying dimensions.
For both smooth and piecewise-smooth problems with noise, BCSCG-DS found better solution than Bobyqa in accordance with the test condition given in Eq.(\ref{solverConverge}) for dimensions $200,250$ and $300$.

\begin{table}[h]
 \begin{center}
	\begin{tabular}{|c|c|c|c|c|}
	\hline
	\multirow{2}{*}{Dimension} &
	\multicolumn{2}{c|}{$\tau=10^{-2}$} &
		\multicolumn{2}{c|}{$\tau=10^{-4}$}\\\cline{2-5}
	&Bobyqa&BCSCG-DS&Bobyqa&BCSCG-DS\\
	\hline
	200&25.45&90.91&25.45&89.09\\
	250&29.09&85.45&29.09&85.45\\
	300&30.91&85.45&30.91&83.64\\
	\hline
	\end{tabular}
 \end{center}
	\caption{Performance ratio (in terms of percentage) for noisy smooth problems} \label{tab:perfSmoothNoisy}
\end{table}

\begin{table}[h]
 \begin{center}
	\begin{tabular}{|c|c|c|c|c|}
	\hline
	\multirow{2}{*}{Dimension} &
	\multicolumn{2}{c|}{$\tau=10^{-2}$} &
		\multicolumn{2}{c|}{$\tau=10^{-4}$}\\\cline{2-5}
	&Bobyqa&BCSCG-DS&Bobyqa&BCSCG-DS\\
	\hline
	200&22.22&92.59&22.22&90.74\\
	250&27.77&85.18&27.77&83.33\\
	300&29.63&83.33&29.63&74.07\\
	\hline
	\end{tabular}
\end{center}
	\caption{Performance ratio (in terms of percentage) for noisy piecewise-smooth problems} \label{tab:perfPiecewiseSmoothNoisy}
\end{table}

The competitive performance of BCSCG-DS is clearly evident from the results of extensive computational experimentation.
In lot of cases, the gap between the solutions attained by BCSCG-DS and Bobyqa is significant. 
The above results clearly validate the applicability of BCSCG-DS, for solving smooth and piecewise-smooth derivative free optimization problems.
They also show its effectiveness and robustness on problems with varying dimensions for different tolerance $\tau$. 


\subsection{Progress Curves}
\begin{figure*}[t]
	\centering
	\includegraphics[scale=0.8]{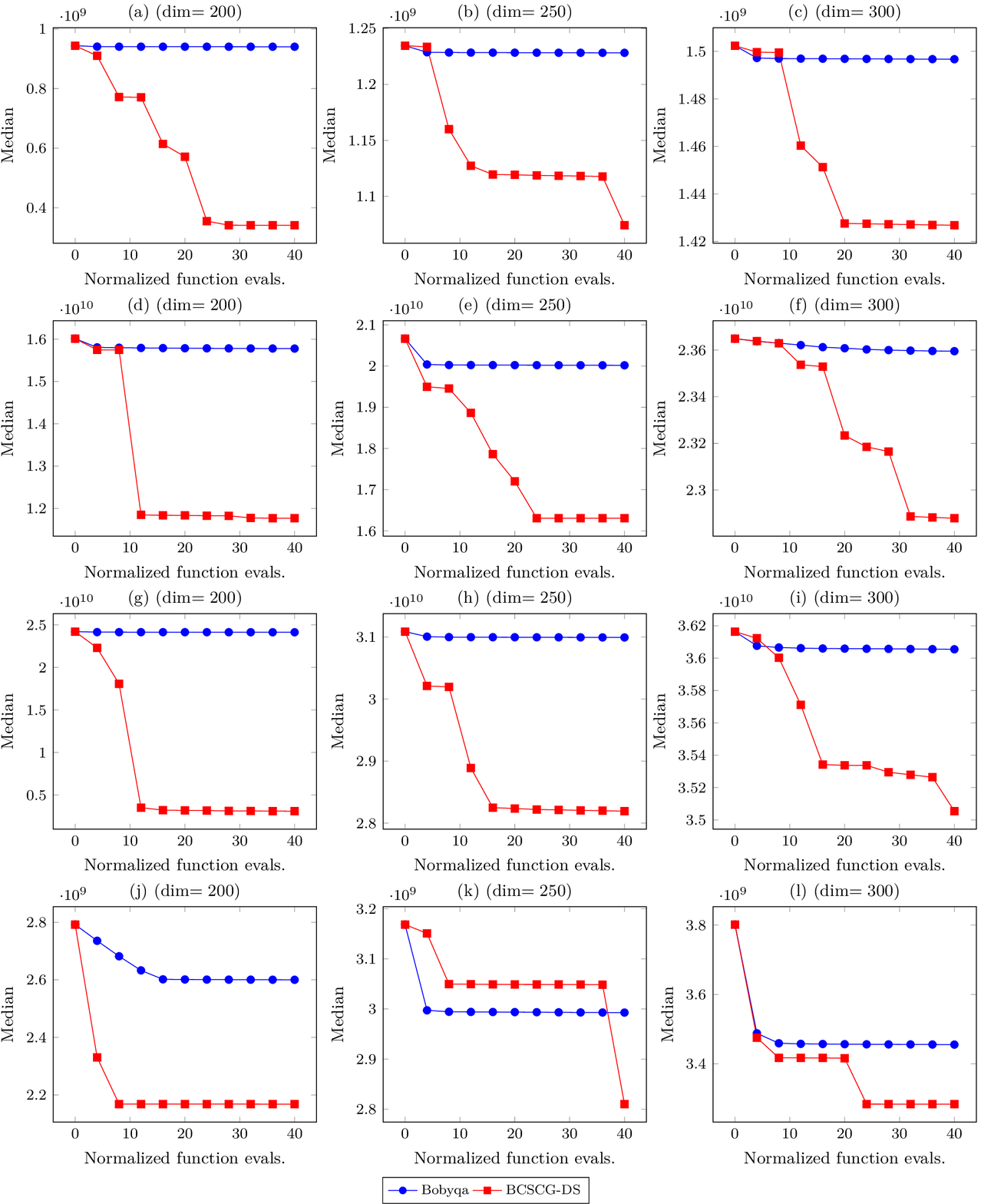}
	\caption{Progress curves for smooth problems with noise with varying dimensions for (a-c) Generalized Broyden Tridiagonal Problem (d-f) Chained and Modified problem HS53 (g-i) Attracting-Repelling Problem (j-l) Modified Countercurrent Reactors Problem-2}
	\label{fig:smoothnoisy1}
\end{figure*}

\begin{figure*}[t]
	\centering
	\includegraphics[scale=0.8]{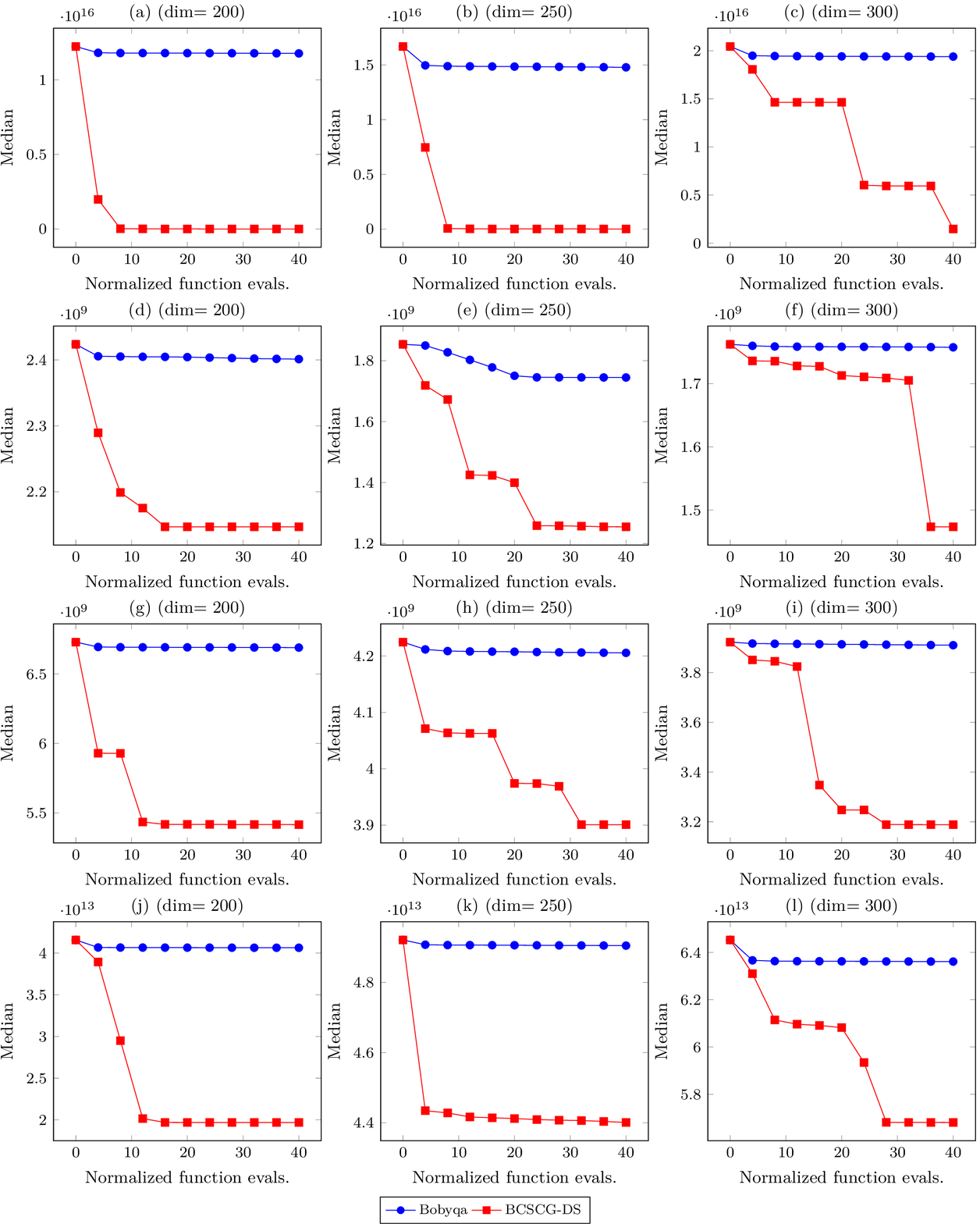}
	\caption{Progress curves for smooth problems with noise with varying dimensions for (a-c) Singular Broyden Problem (d-f) Flow in a Channel Problem (g-i) Swirling Flow Problem (j-l) Driven Cavity Problem}
	\label{fig:smoothnoisy2}
\end{figure*}

\begin{figure*}[t]
	\centering
	\includegraphics[scale=0.8]{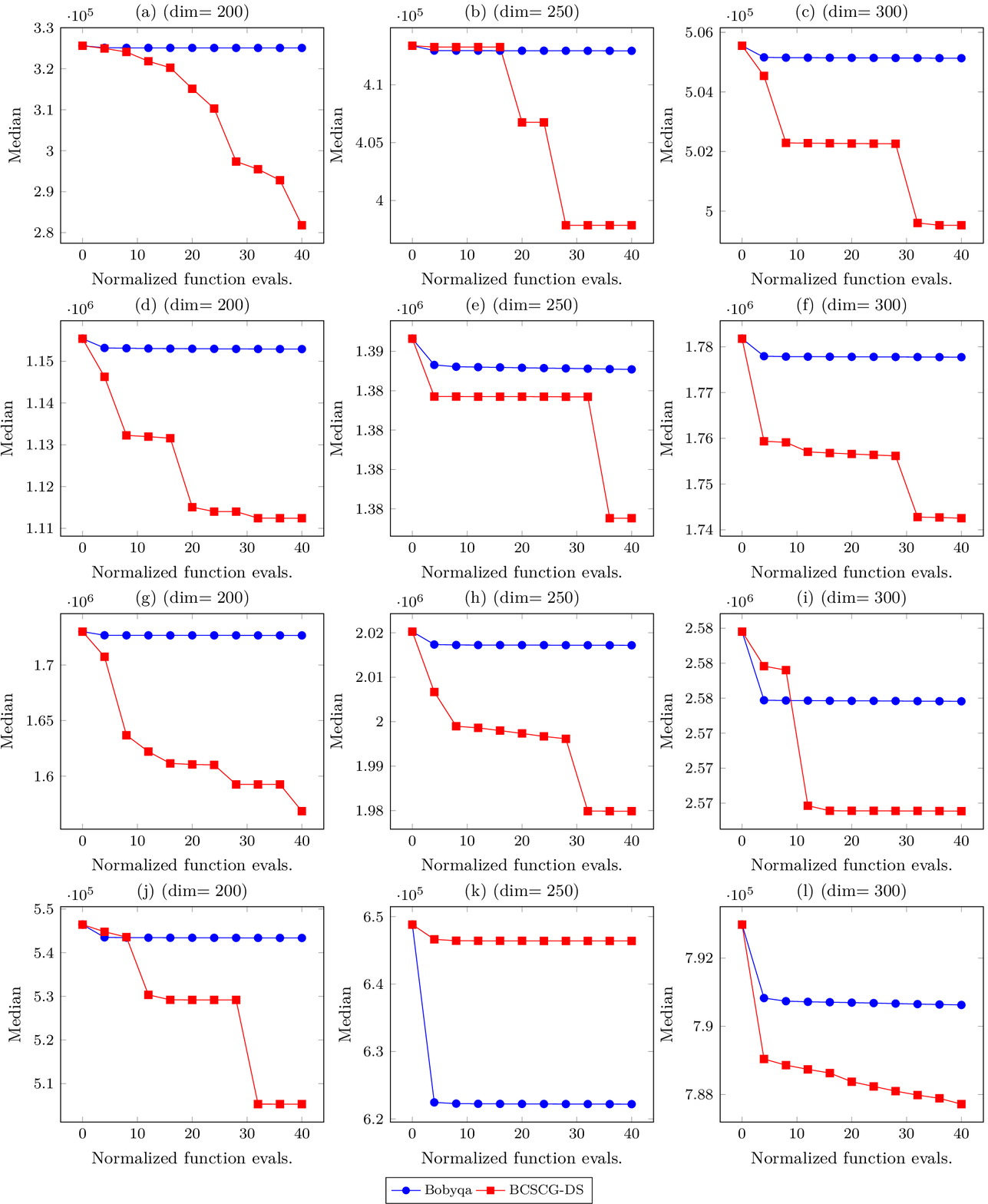}
	\caption{Progress curves for piecewise-smooth problems with noise with varying dimensions for (a-c) Generalized Broyden Tridiagonal Problem (d-f) Chained and Modified problem HS53 (g-i) Attracting-Repelling Problem (j-l) Modified Countercurrent Reactors Problem-2}
	\label{fig:piecewiseSmoothnoisy1}
\end{figure*}

\begin{figure*}[t]
	\centering
	\includegraphics[scale=0.8]{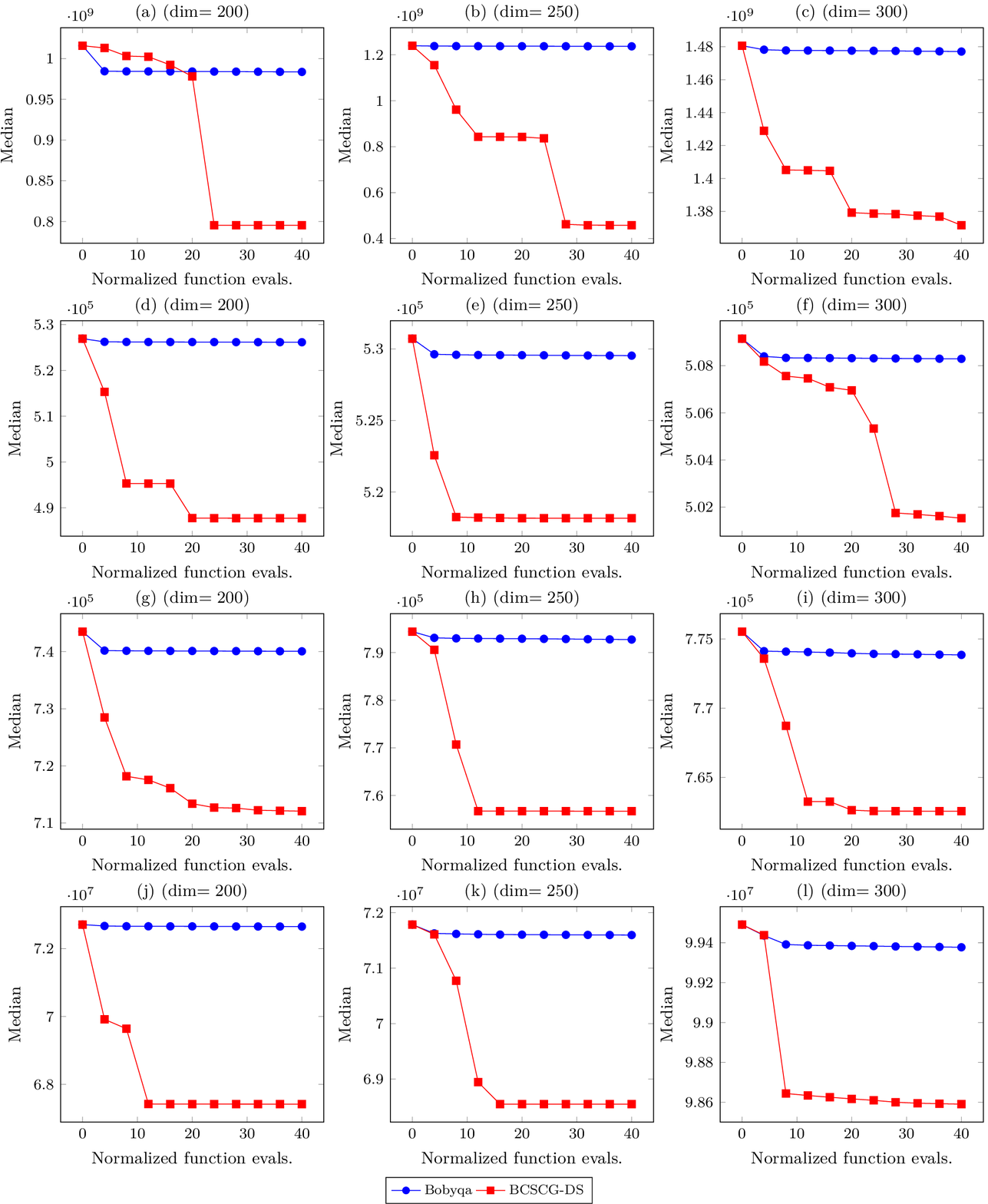}
	\caption{Progress curves for piecewise-smooth problems with noise with varying dimensions for (a-c) Singular Broyden Problem (d-f) Flow in a Channel Problem (g-i) Swirling Flow Problem (j-l) Driven Cavity Problem}
	\label{fig:piecewiseSmoothnoisy2}
\end{figure*}

 For quantitative comparison between the proposed method and Bobyqa method, we also show the progress curves \cite{regis2013combining} depicting the local optimum value versus the number of normalized function evaluations.
 For a given function, the local minimum function value is computed for a specific value of the starting point or initial guess and at a particular value of normalized function evaluations.
For better statistical relevance, the aforesaid computation is performed for ten different starting points and median value of the multiple minima obtained over ten trials is calculated.
This process is performed for each solver by varying the number of normalized function evaluations.
Subsequently, the median optimum function value (y-axis, log scale) is plotted with respect to normalized function evaluations (x-axis) for the given function.
The resulting plots for the two solvers obtained by considering eight noisy smooth and piecewise-smooth functions, with three different dimensions (200, 250, and 300) for each function, are shown in figures \ref{fig:smoothnoisy1} to \ref{fig:piecewiseSmoothnoisy2}.

Figures \ref{fig:smoothnoisy1} and \ref{fig:smoothnoisy2} illustrate the progress curves for 24 (eight functions, three dimensions) smooth problems.
To gain better understanding about the plots, consider figure \ref{fig:smoothnoisy1}(a) where the median of the optimum function values computed over ten trials versus normalized function evaluations is shown for Generalized Broyden Tridiagonal function with a dimension of magnitude 200.
As the number of normalized function evaluations increase, the median optima values obtained by the proposed BCSCG-DS method improve markedly, characterized by lowering values on the y-axis.
The median of function values evaluated at different starting points for both the solvers is approximately $9.438\times10^8$.
 The performance of different solvers is as follows:
 \begin{enumerate}
	 \item \textbf{Bobyqa}:  After 4 normalized function evaluations, the median of function values is approximately $9.407\times10^8$. 
		 It converges to the approximately $9.403\times10^8$ after 40 normalized function evaluations. 
	 \item \textbf{BCSCG-DS}:  After 4 normalized function evaluations, the median of function values is $9.100\times10^8$. 
		 It converges to approximately $3.412\times10^8$ after 40 normalized function evaluations.
 \end{enumerate}

The plots corresponding to dimensions of magnitude 250 and 300 for the Generalized Broyden Tridiagonal function are shown in figures \ref{fig:smoothnoisy1}(b) and \ref{fig:smoothnoisy1}(c). 
Similarly, progress curves are shown in figures \ref{fig:smoothnoisy1}(d-f) for Chained and Modified problem HS53, figures \ref{fig:smoothnoisy1}(g-i) for Attracting-Repelling Problem, and figures \ref{fig:smoothnoisy1}(j-l) for Modified Countercurrent Reactors Problem-2. 
The progress curve plots corresponding to dimensions of magnitude 200, 250 and 300 are shown for Singular Broyden Problem in figures \ref{fig:smoothnoisy2}(a-c), Flow Channel Problem in figures \ref{fig:smoothnoisy2}(d-f), Swirling Flow Problem in figures \ref{fig:smoothnoisy2}(g-i)  and Driven Cavity Problem in figures \ref{fig:smoothnoisy2}(j-l). 
Similarly, figures \ref{fig:piecewiseSmoothnoisy1} and \ref{fig:piecewiseSmoothnoisy2} illustrate the progress curves for 24 piecewise-smooth problems.

For further insights, tabular results are also presented which enable quantitative analysis of the performance of two solvers.
	Tables \ref{tab:smoothBobyqaCSCG} and \ref{tab:pwsmoothBobyqaCSCG} show the median (of 10 instances) of optimal function values attained by different solvers for smooth and piecewise-smooth problems.
Results are shown over three columns $200,250$ and $300$ which refer to the dimensions considered for experimentation.
For each dimension, the first two subcolumns refer to the median (of 10 instances) of optimal solutions found by Bobyqa and BCSCG-DS after $40$ normalized function evaluations while the third subcolumn refer to the median of function values of 10 different starting points.
	From the progress curves and tabular results, we can infer that the proposed method offers comparatively better optima values in contrast to Bobyqa.

\subsection{Computational Time}
We now analyse the computational time taken by the two solvers.
We compared our approach with Bobyqa on 4 different noisy smooth problems over 9 different dimensions varying from 100 to 500, summing up to total 36 instances.
We used 10 different starting points for each instance.
In order to reduce any ambiguity attributed to computational noise, we repeated the experiment five times.
We used gcc compiler (version 4.9.2) for both the solvers on Intel Xeon(R) E5-2670 processor with GNU Linux Operating System.
Same test codes \cite{lukvsan2018sparse} were used for function evaluations for both the solvers.

\begin{table}[h]
	\begin{center}
		\begin{tabular}{|p{5cm}|p{2cm}|p{2cm}|p{2cm}|p{2cm}|}
			\hline
			Problem&Dimension&Bobyqa Optimal Value&BCSCG-DS Optimal Value&Initial Function Value\\
			\hline
			\multirow{9}{5cm}{Chained Rosenbrock Function}
			&100&8.822e+05&6.002e+02&8.835e+05\\
			&150&1.267e+06&9.064e+05&1.269e+06\\
			&200&1.692e+06&1.642e+06&1.695e+06\\
			&250&2.041e+06&2.007e+06&2.041e+06\\
			&300&2.454e+06&2.333e+06&2.457e+06\\
			&350&2.946e+06&2.931e+06&2.948e+06\\
			&400&3.280e+06&3.273e+06&3.284e+06\\
			&450&3.800e+06&3.801e+06&3.803e+06\\
			&500&4.148e+06&4.136e+06&4.151e+06\\\hline
			\multirow{9}{5cm}{Chained and Modified HS47}
			&100&1.672e+06&1.284e+09&2.703e+10\\
			&150&3.446e+10&3.185e+10&5.198e+10\\
			&200&3.600e+10&5.654e+10&6.801e+10\\
			&250&6.372e+10&3.744e+10&7.984e+10\\
			&300&1.024e+11&8.663e+10&1.054e+11\\
			&350&1.224e+11&1.019e+11&1.236e+11\\
			&400&1.392e+11&1.254e+11&1.413e+11\\
			&450&1.477e+11&1.321e+11&1.502e+11\\
			&500&1.588e+11&1.544e+11&1.604e+11\\\hline
			\multirow{9}{5cm}{Chained and Modified HS48}
			&100&7.010e+05&3.434e+05&7.028e+05\\
			&150&1.083e+06&1.054e+06&1.084e+06\\
			&200&1.561e+06&1.450e+06&1.562e+06\\
			&250&1.799e+06&1.769e+06&1.802e+06\\
			&300&2.224e+06&2.210e+06&2.228e+06\\
			&350&2.646e+06&2.605e+06&2.650e+06\\
			&400&2.843e+06&2.827e+06&2.847e+06\\
			&450&3.293e+06&3.293e+06&3.297e+06\\
			&500&3.660e+06&3.661e+06&3.664e+06\\\hline
			\multirow{9}{5cm}{Modified Countercurrent Reactors Problem-1}
			&100&2.410e+05&1.395e+05&2.415e+05\\
			&150&3.679e+05&3.319e+05&3.685e+05\\
			&200&5.046e+05&4.629e+05&5.056e+05\\
			&250&6.059e+05&5.910e+05&6.069e+05\\
			&300&7.547e+05&7.546e+05&7.554e+05\\
			&350&8.637e+05&8.483e+05&8.650e+05\\
			&400&9.979e+05&9.974e+05&9.989e+05\\
			&450&1.114e+06&1.115e+06&1.115e+06\\
			&500&1.226e+06&1.225e+06&1.228e+06\\\hline
	\end{tabular}
\end{center}
\caption{Test Results for Problems with Varying Dimensions} \label{tab:timeOptimValues}
\end{table}

In table \ref{tab:timeOptimValues}, a comparison between the solutions attained by the two solvers after 40 normalized function evaluations is shown.
The first and second column represent the function name and its corresponding dimension.
The columns under the headers "Bobyqa Value" and "BCSCG-DS Value" represent the median of solution values obtained from 10 different starting points.
The last column with header "First Value" represents the median of initial function value evaluated at the ten different starting points.

In figure \ref{fig:timePlot}, a comparison of total computational time taken in seconds by both solvers is shown.
We observe that at dimensions close to 100, both solvers take nearly same time.
However, as dimensions increase, the computational burden of Bobyqa solver increases significantly faster than that of the proposed solver.  
Notably, BCSCG-DS becomes approximately five times faster than Bobyqa when the dimension of the problem increases to $500$.

From the table \ref{tab:timeOptimValues} and figure \ref{fig:timePlot}, it is clear that BCSCG-DS is competitive with respect to Bobyqa in terms of both solution quality and computational time.
\begin{figure*}[h]
	\centering
	\includegraphics[scale=1]{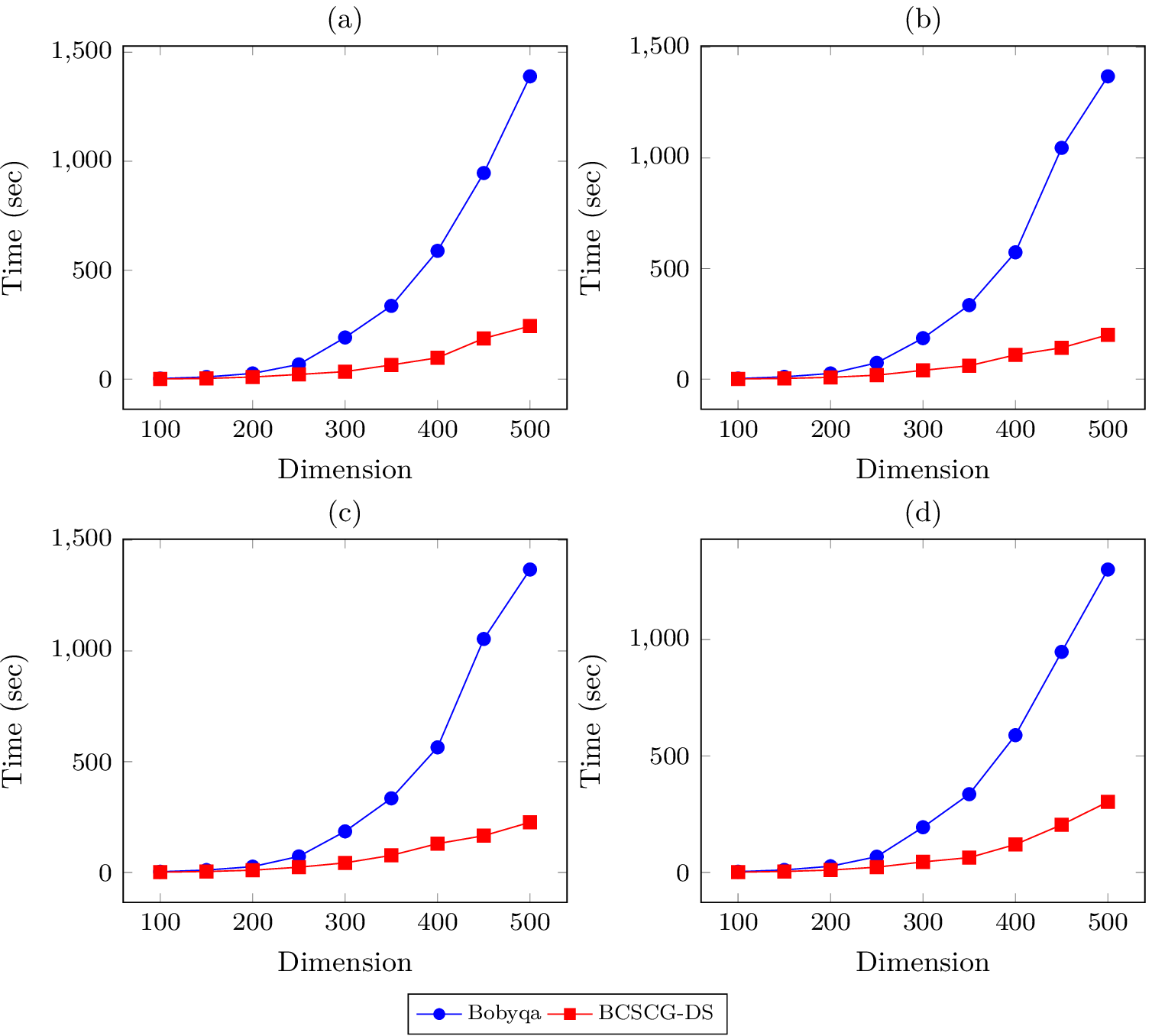}
	\caption{Computational time taken by Bobyqa and BCSCG-DS for problems (a)Chained Rosenbrock (b)Chained and Modified HS47 (c)Chained and Modified HS48 (d)Modified Countercurrent Reactors Problem-1}
	\label{fig:timePlot}
\end{figure*}

\section{Conclusion}\label{Conclusion}
In this paper, we proposed a new approach for solving high dimension box constrained derivative free optimization problems with noise.
We integrated scaled conjugate gradient algorithm with direct search approach whose performance is further enhanced by inclusion of quadratic models into the framework. 
The computational results clearly demonstrate its effectiveness and competitive performance when compared to standard solver.
Importantly, our approach offers three distinct advantages:
\begin{enumerate}
	\item Guaranteed convergence to local optimum.
	\item Extensibility towards larger dimensions for high dimensional optimization.
	\item Low computational time when compared to other solvers.
\end{enumerate}

In addition to suitability for high dimensional derivative free optimization, the proposed approach is also effective for solving noisy piecewise-smooth problems, as is evident from the extensive and rigorous numerical experiments performed.
This class of problems is conventionally considered to be more challenging than its smooth counterpart because of differentiability issues.
Consequently, our algorithm BCSCG-DS holds promising potential for solving the high dimensional box constrained derivative free optimization problems.

\clearpage
\begin{table}[h]
	\begin{center}
		\begin{tabular}{|c|l|}
			\hline
			S.No.&Function Name\\
			\hline
			1&Chained Rosenbrock function\\
			2&Chained Wood function\\
			3&Chained Powell singular function\\
			4&Chained Cragg and Levy function\\
			5&Generalized Broyden tridiagonal function\\
			6&Generalized Broyden banded function\\
			7&Chained Freudenstein and Roth function\\
			8&Wright and Holt zero residual problem\\
			9&Toint quadratic merging problem\\
			10&Chained serpentine function\\
			11&Chained and modified problem HS47\\
			12&Chained and modified problem HS48\\
			13&Sparse signomial function\\
			14&Sparse trigonometric function\\
			15&Countercurrent reactors problem 1(modified)\\
			16&Tridiagonal system\\
			17&Structured Jacobian problem\\
			18&Modified discrete boundary value problem\\
			19&Chained and modified problem HS53\\
			20&Attracting-Repelling problem\\
			21&Countercurrent reactors problem 2(modified)\\
			22&Trigonometric system\\
			23&Trigonometric - exponential system (trigexp 1)\\
			24&Singular Broyden problem\\
			25&Five-diagonal system\\
			26&Seven-diagonal system\\
			27&Extended Freudenstein and Roth function\\
			28&Broyden tridiagonal problem\\
			29&Extended Powell badly scaled function\\
			30&Extended Wood problem\\
			31&Tridiagonal exponential problem\\
			32&Discrete boundary value problem\\
			33&Brent problem\\
			34&Flow in a channel\\
			35&Swirling flow\\
			36&Bratu problem\\
			37&Poisson problem 1\\
			38&Poisson problem 2\\
			39&Porous medium problem\\
			40&Convection-difussion problem\\
			41&Nonlinear biharmonic problem\\
			42&Driven cavity problem\\
			43&Problem 2.47 of \cite{lukvsan2018sparse}\\
			44&Problem 2.48 of \cite{lukvsan2018sparse}\\
			45&Problem 2.49 of \cite{lukvsan2018sparse}\\
			46&Problem 2.50 of \cite{lukvsan2018sparse}\\
			47&Problem 2.51 of \cite{lukvsan2018sparse}\\
			48&Problem 2.52 of \cite{lukvsan2018sparse}\\
			49&Problem 2.53 of \cite{lukvsan2018sparse}\\
			50&Problem 2.54 of \cite{lukvsan2018sparse}\\
			51&Broyden banded function\\
			52&Ascher and Russel boundary value problem\\
			53&Potra and Rheinboldt boundary value problem\\
			54&Modified Bratu problem\\
			55&Nonlinear Dirichlet problem\\
			\hline
		\end{tabular}
	\end{center}
	\caption{Test Problems} \label{tab:testProblems}
\end{table}
\begin{table}[h]
	\begin{center}
		\begin{tabular}{|p{3mm}|p{1.3cm}|p{1.3cm}|p{1.4cm}|p{1.3cm}|p{1.3cm}|p{1.4cm}|p{1.3cm}|p{1.3cm}|p{1.4cm}|}
			\hline
			\multicolumn{1}{|c|}{\textbf{}}
			&
			\multicolumn{3}{|c|}{\textbf{Dimension=200}}
			&
			\multicolumn{3}{|c|}{\textbf{Dimension=250}}
			&
			\multicolumn{3}{|c|}{\textbf{Dimension=300}}\\\hline
			S No.&Bobyqa Optimal Value&BCSCG-DS Optimal Value&Initial Function Value&Bobyqa Optimal Value&BCSCG-DS Optimal Value&Initial Function Value&Bobyqa Optimal Value&BCSCG-DS Optimal Value&Initial Function Value\\
			\hline
			1&2.34e+10&2.01e+06&2.41e+10&3.17e+10&2.43e+10&3.19e+10&3.72e+10&3.62e+10&3.73e+10\\
			2&1.87e+10&1.21e+10&2.46e+10&2.87e+10&2.54e+10&2.89e+10&3.62e+10&3.60e+10&3.77e+10\\
			3&8.71e+09&6.80e+09&9.43e+09&1.39e+10&1.22e+10&1.41e+10&1.64e+10&1.50e+10&1.64e+10\\
			4&3.20e+73&4.09e+48&3.89e+85&9.96e+72&1.07e+45&7.21e+84&1.00e+74&1.29e+62&1.08e+86\\
			5&9.40e+08&3.41e+08&9.44e+08&1.23e+09&1.07e+09&1.23e+09&1.50e+09&1.43e+09&1.50e+09\\
			6&1.13e+13&3.60e+08&1.14e+13&1.31e+13&9.63e+08&1.48e+13&1.64e+13&1.46e+13&1.65e+13\\
			7&8.20e+11&9.05e+06&8.61e+11&1.09e+12&2.15e+07&1.12e+12&1.24e+12&1.16e+12&1.27e+12\\
			8&6.10e+60&1.24e+59&6.28e+84&3.36e+66&1.55e+73&1.15e+85&7.95e+61&3.65e+79&1.85e+85\\
			9&2.05e+13&2.46e+13&4.01e+13&1.50e+13&2.28e+13&5.52e+13&1.27e+13&5.04e+13&7.94e+13\\
			10&1.67e+07&1.46e+07&1.67e+07&2.04e+07&2.04e+07&2.04e+07&2.46e+07&2.45e+07&2.46e+07\\
			11&1.23e+13&4.23e+19&3.64e+20&1.92e+14&1.46e+20&4.68e+20&1.89e+16&2.40e+20&4.35e+20\\
			12&1.87e+10&1.48e+10&1.88e+10&2.16e+10&2.07e+10&2.21e+10&2.61e+10&2.44e+10&2.62e+10\\
			13&4.52e+26&7.50e+29&7.56e+32&3.90e+26&4.53e+31&1.08e+33&2.37e+26&1.32e+32&1.34e+33\\
			14&1.08e+04&4.14e+06&6.89e+06&1.31e+04&4.98e+06&8.18e+06&1.94e+04&8.41e+06&9.88e+06\\
			15&2.19e+09&1.97e+09&2.20e+09&2.74e+09&2.66e+09&2.75e+09&3.13e+09&3.10e+09&3.15e+09\\
			16&2.67e+13&6.24e+06&2.88e+13&2.95e+13&1.37e+13&3.31e+13&4.21e+13&1.15e+13&4.23e+13\\
			17&9.79e+08&9.63e+08&9.84e+08&1.24e+09&1.16e+09&1.27e+09&1.48e+09&1.26e+09&1.49e+09\\
			18&1.09e+06&9.25e+05&1.09e+06&1.10e+06&1.09e+06&1.10e+06&1.44e+06&1.43e+06&1.44e+06\\
			19&1.58e+10&1.18e+10&1.60e+10&2.00e+10&1.63e+10&2.07e+10&2.36e+10&2.29e+10&2.36e+10\\
			20&2.41e+10&3.10e+09&2.42e+10&3.10e+10&2.82e+10&3.11e+10&3.61e+10&3.51e+10&3.62e+10\\
			21&2.60e+09&2.17e+09&2.79e+09&2.99e+09&2.81e+09&3.17e+09&3.46e+09&3.28e+09&3.80e+09\\
			22&1.17e+01&5.59e+04&1.31e+05&1.97e+01&1.21e+05&2.73e+05&3.91e+01&2.52e+05&4.50e+05\\
			23&5.05e+71&3.09e+40&2.20e+83&4.54e+70&7.27e+56&1.22e+85&7.77e+70&1.37e+63&6.13e+84\\
			24&1.18e+16&3.37e+10&1.22e+16&1.48e+16&4.93e+12&1.67e+16&1.94e+16&1.48e+15&2.04e+16\\
			25&2.69e+13&2.71e+10&2.78e+13&3.87e+13&3.70e+13&3.89e+13&4.37e+13&4.21e+13&4.39e+13\\
			26&2.83e+13&7.75e+07&2.85e+13&3.38e+13&1.10e+09&3.56e+13&4.91e+13&4.46e+13&4.94e+13\\
			27&2.74e+11&2.59e+11&4.37e+11&4.51e+11&4.13e+11&5.20e+11&6.38e+11&5.69e+11&6.48e+11\\
			28&6.40e+07&5.77e+07&6.41e+07&8.10e+07&8.11e+07&8.11e+07&8.88e+07&8.76e+07&9.17e+07\\
			29&1.29e+25&2.94e+28&3.45e+43&1.58e+25&2.50e+35&2.78e+43&9.07e+24&7.79e+34&2.99e+43\\
			30&7.48e+15&5.53e+15&7.87e+15&9.35e+15&7.05e+15&1.00e+16&1.19e+16&1.03e+16&1.20e+16\\
			31&1.62e+05&1.66e+05&1.73e+05&1.99e+05&2.04e+05&2.11e+05&2.34e+05&2.41e+05&2.49e+05\\
			32&9.63e+05&9.32e+05&9.65e+05&1.25e+06&1.25e+06&1.25e+06&1.53e+06&1.47e+06&1.53e+06\\
			33&1.00e+10&7.59e+09&1.01e+10&1.33e+10&1.26e+10&1.33e+10&1.56e+10&1.56e+10&1.63e+10\\
			34&2.40e+09&2.15e+09&2.42e+09&1.74e+09&1.25e+09&1.85e+09&1.76e+09&1.47e+09&1.76e+09\\
			35&6.69e+09&5.42e+09&6.73e+09&4.21e+09&3.90e+09&4.22e+09&3.91e+09&3.19e+09&3.92e+09\\
			36&3.84e+20&6.08e+05&2.70e+40&6.25e+20&7.79e+07&2.71e+40&3.35e+21&9.20e+08&1.89e+40\\
			37&1.39e+07&1.39e+07&1.40e+07&1.26e+07&1.23e+07&1.26e+07&1.21e+07&1.13e+07&1.22e+07\\
			38&3.17e+06&2.98e+06&3.18e+06&3.38e+06&3.14e+06&3.39e+06&4.78e+06&4.73e+06&4.79e+06\\
			39&8.39e+12&4.79e+12&9.16e+12&9.43e+12&8.83e+12&9.51e+12&9.91e+12&8.36e+12&1.05e+13\\
			40&7.84e+08&7.29e+08&8.21e+08&8.96e+08&8.14e+08&8.99e+08&8.27e+08&7.65e+08&8.29e+08\\
			41&1.04e+08&1.00e+08&1.04e+08&1.13e+08&1.05e+08&1.14e+08&1.70e+08&1.70e+08&1.70e+08\\
			42&4.06e+13&1.97e+13&4.16e+13&4.90e+13&4.40e+13&4.92e+13&6.36e+13&5.68e+13&6.45e+13\\
			43&8.89e+08&8.88e+08&8.90e+08&4.58e+08&4.58e+08&4.59e+08&2.67e+08&2.67e+08&2.67e+08\\
			44&1.17e+25&4.97e+17&1.16e+44&5.00e+24&3.27e+39&1.22e+44&3.99e+24&1.61e+21&6.15e+43\\
			45&2.50e+06&1.97e+06&2.50e+06&3.47e+06&2.98e+06&3.48e+06&3.96e+06&3.70e+06&3.97e+06\\
			46&1.85e+15&5.75e+07&1.25e+34&5.19e+15&5.89e+11&5.75e+33&2.20e+16&3.50e+22&4.74e+33\\
			47&4.48e+06&3.26e+06&4.50e+06&6.09e+06&6.10e+06&6.11e+06&7.03e+06&7.03e+06&7.05e+06\\
			48&4.48e+11&2.14e+07&4.51e+11&5.29e+11&2.87e+07&5.41e+11&6.32e+11&1.87e+08&6.40e+11\\
			49&1.67e+05&1.60e+05&1.67e+05&2.08e+05&2.05e+05&2.09e+05&2.48e+05&2.48e+05&2.49e+05\\
			50&9.41e+05&8.31e+05&9.43e+05&1.18e+06&1.13e+06&1.18e+06&1.47e+06&1.47e+06&1.47e+06\\
			51&1.01e+13&1.15e+09&1.13e+13&1.27e+13&8.52e+09&1.37e+13&1.69e+13&1.58e+13&1.70e+13\\
			52&1.04e+06&1.02e+06&1.04e+06&1.27e+06&1.25e+06&1.28e+06&1.57e+06&1.51e+06&1.57e+06\\
			53&1.00e+06&8.83e+05&1.01e+06&1.23e+06&1.19e+06&1.23e+06&1.51e+06&1.48e+06&1.51e+06\\
			54&2.16e+20&1.53e+11&3.54e+38&1.51e+19&1.67e+15&3.22e+38&9.57e+18&1.99e+26&3.84e+38\\
			55&3.23e+06&2.92e+06&3.24e+06&3.75e+06&3.57e+06&3.75e+06&4.93e+06&4.58e+06&4.94e+06\\
			\hline
		\end{tabular}
	\end{center}
	\vspace{-3mm}
	\caption{Test Results for High Dimension Smooth Problems with Noise} \label{tab:smoothBobyqaCSCG}
\end{table}
\begin{table}[h]
	\begin{center}
\begin{tabular}{|p{3mm}|p{1.3cm}|p{1.3cm}|p{1.4cm}|p{1.3cm}|p{1.3cm}|p{1.4cm}|p{1.3cm}|p{1.3cm}|p{1.4cm}|}
	\hline
	\multicolumn{1}{|c|}{}
	&
	\multicolumn{3}{|c|}{\textbf{Dimension=200}}
	&
	\multicolumn{3}{|c|}{\textbf{Dimension=250}}
	&
	\multicolumn{3}{|c|}{\textbf{Dimension=300}}\\\hline
	S No.&Bobyqa Optimal Value&BCSCG-DS Optimal Value&Initial Function Value&Bobyqa Optimal Value&BCSCG-DS Optimal Value&Initial Function Value&Bobyqa Optimal Value&BCSCG-DS Optimal Value&Initial Function Value\\
	\hline
     1&1.67e+06&1.61e+06&1.68e+06&2.08e+06&1.99e+06&2.08e+06&2.55e+06&2.55e+06&2.56e+06\\
     2&1.62e+06&1.56e+06&1.63e+06&2.05e+06&2.00e+06&2.05e+06&2.41e+06&2.36e+06&2.41e+06\\
     3&9.14e+05&8.21e+05&9.16e+05&1.19e+06&1.19e+06&1.19e+06&1.37e+06&1.33e+06&1.37e+06\\
     4&1.36e+22&8.48e+24&1.70e+43&9.96e+20&2.99e+29&8.78e+42&6.58e+20&9.56e+34&6.30e+42\\
     5&3.25e+05&2.82e+05&3.26e+05&4.13e+05&3.98e+05&4.13e+05&5.05e+05&5.00e+05&5.06e+05\\
     6&3.26e+07&1.16e+07&3.26e+07&3.98e+07&3.40e+07&3.99e+07&4.77e+07&4.53e+07&4.78e+07\\
     7&1.26e+07&1.08e+07&1.26e+07&1.53e+07&1.51e+07&1.53e+07&1.82e+07&1.76e+07&1.82e+07\\
     8&1.49e+28&2.66e+39&7.16e+42&1.95e+32&2.94e+34&6.28e+42&5.75e+28&2.29e+41&7.60e+42\\
     9&3.68e+07&3.14e+07&3.75e+07&4.51e+07&5.07e+07&5.21e+07&5.71e+07&5.10e+07&5.73e+07\\
    10&5.24e+04&5.23e+04&5.24e+04&6.76e+04&6.75e+04&6.78e+04&8.27e+04&8.16e+04&8.27e+04\\
    11&5.90e+10&3.46e+10&6.58e+10&8.73e+10&6.22e+10&9.48e+10&9.87e+10&9.57e+10&1.05e+11\\
    12&1.43e+06&1.40e+06&1.43e+06&1.82e+06&1.72e+06&1.83e+06&2.24e+06&2.24e+06&2.24e+06\\
    13&1.34e+15&6.55e+15&7.80e+16&6.44e+16&4.87e+16&1.04e+17&6.85e+16&5.30e+16&1.04e+17\\
    14&8.85e+02&3.32e+04&4.16e+04&1.60e+03&4.70e+04&5.21e+04&1.67e+03&5.26e+04&6.18e+04\\
    15&4.93e+05&4.55e+05&4.93e+05&6.13e+05&6.02e+05&6.13e+05&7.59e+05&7.41e+05&7.61e+05\\
    16&4.95e+07&6.67e+06&4.96e+07&6.22e+07&5.78e+07&6.30e+07&7.26e+07&6.96e+07&7.27e+07\\
    17&3.41e+05&3.18e+05&3.41e+05&4.19e+05&4.04e+05&4.20e+05&4.97e+05&4.97e+05&4.97e+05\\
    18&1.17e+04&1.16e+04&1.17e+04&1.43e+04&1.41e+04&1.44e+04&1.80e+04&1.79e+04&1.80e+04\\
    19&1.15e+06&1.11e+06&1.16e+06&1.39e+06&1.38e+06&1.39e+06&1.78e+06&1.74e+06&1.78e+06\\
    20&1.73e+06&1.57e+06&1.73e+06&2.02e+06&1.98e+06&2.02e+06&2.58e+06&2.57e+06&2.58e+06\\
    21&5.43e+05&5.05e+05&5.46e+05&6.22e+05&6.46e+05&6.49e+05&7.91e+05&7.88e+05&7.93e+05\\
    22&2.81e+03&2.69e+03&3.40e+03&1.04e+02&4.39e+03&5.56e+03&7.05e+03&7.17e+03&8.08e+03\\
    23&2.64e+21&6.19e+21&2.66e+42&4.75e+21&1.33e+33&3.51e+41&3.17e+23&1.61e+33&1.99e+43\\
    24&9.84e+08&7.95e+08&1.02e+09&1.24e+09&4.58e+08&1.24e+09&1.48e+09&1.37e+09&1.48e+09\\
    25&4.92e+07&4.73e+07&4.93e+07&6.38e+07&5.80e+07&6.40e+07&7.33e+07&7.22e+07&7.35e+07\\
    26&5.08e+07&4.33e+07&5.09e+07&6.09e+07&5.90e+07&6.10e+07&7.20e+07&6.87e+07&7.21e+07\\
    27&6.00e+06&5.57e+06&6.03e+06&7.95e+06&7.56e+06&7.97e+06&9.46e+06&9.68e+06&9.76e+06\\
    28&8.72e+04&8.22e+04&8.74e+04&1.02e+05&1.00e+05&1.02e+05&1.27e+05&1.26e+05&1.27e+05\\
    29&1.92e+10&3.81e+14&8.00e+21&4.83e+10&6.72e+18&1.29e+22&1.19e+12&2.26e+18&1.84e+22\\
    30&6.61e+08&6.08e+08&6.63e+08&7.40e+08&7.15e+08&7.42e+08&9.05e+08&8.88e+08&9.07e+08\\
    31&4.81e+03&4.91e+03&5.01e+03&5.85e+03&5.94e+03&6.08e+03&7.17e+03&7.32e+03&7.47e+03\\
    32&1.17e+04&1.16e+04&1.17e+04&1.38e+04&1.36e+04&1.38e+04&1.76e+04&1.75e+04&1.76e+04\\
    33&9.94e+05&9.40e+05&9.96e+05&1.22e+06&1.19e+06&1.22e+06&1.56e+06&1.54e+06&1.56e+06\\
    34&5.26e+05&4.88e+05&5.27e+05&5.30e+05&5.18e+05&5.31e+05&5.08e+05&5.02e+05&5.09e+05\\
    35&7.40e+05&7.12e+05&7.43e+05&7.93e+05&7.57e+05&7.94e+05&7.74e+05&7.63e+05&7.76e+05\\
    36&1.08e+09&6.20e+03&2.73e+20&4.14e+08&1.14e+04&2.78e+20&5.51e+09&1.77e+18&2.86e+20\\
    37&3.98e+04&3.65e+04&3.99e+04&4.14e+04&4.09e+04&4.15e+04&4.98e+04&4.97e+04&4.99e+04\\
    38&2.07e+04&2.01e+04&2.08e+04&2.48e+04&2.48e+04&2.49e+04&3.15e+04&3.15e+04&3.16e+04\\
    39&2.54e+07&2.00e+07&2.60e+07&2.80e+07&2.55e+07&2.81e+07&3.22e+07&3.20e+07&3.23e+07\\
    40&2.80e+05&2.56e+05&2.81e+05&3.15e+05&3.15e+05&3.15e+05&3.71e+05&3.71e+05&3.72e+05\\
    41&1.18e+05&1.15e+05&1.21e+05&1.37e+05&1.35e+05&1.37e+05&1.75e+05&1.74e+05&1.75e+05\\
    42&7.27e+07&6.74e+07&7.27e+07&7.16e+07&6.85e+07&7.18e+07&9.94e+07&9.86e+07&9.95e+07\\
    43&6.26e+04&6.25e+04&6.27e+04&5.49e+04&5.49e+04&5.49e+04&5.13e+04&5.11e+04&5.14e+04\\
    44&1.88e+10&3.96e+08&2.09e+22&1.07e+11&1.76e+06&2.09e+22&2.15e+11&2.30e+18&1.91e+22\\
    45&1.75e+04&1.73e+04&1.75e+04&2.24e+04&2.24e+04&2.25e+04&2.66e+04&2.66e+04&2.66e+04\\
    46&1.07e+06&2.07e+12&3.00e+17&6.11e+06&4.55e+15&2.09e+17&6.45e+06&5.84e+13&1.73e+17\\
    47&2.22e+04&2.09e+04&2.23e+04&2.85e+04&2.83e+04&2.86e+04&3.49e+04&3.48e+04&3.50e+04\\
    48&6.67e+06&9.23e+05&6.69e+06&8.15e+06&7.80e+06&8.17e+06&9.60e+06&9.50e+06&9.62e+06\\
    49&5.03e+03&4.99e+03&5.03e+03&6.19e+03&6.17e+03&6.19e+03&7.48e+03&7.44e+03&7.49e+03\\
    50&1.13e+04&1.13e+04&1.13e+04&1.47e+04&1.46e+04&1.47e+04&1.71e+04&1.70e+04&1.71e+04\\
    51&3.17e+07&1.39e+07&3.17e+07&3.87e+07&3.86e+07&3.89e+07&4.73e+07&4.73e+07&4.74e+07\\
    52&1.18e+04&1.18e+04&1.18e+04&1.48e+04&1.45e+04&1.48e+04&1.76e+04&1.76e+04&1.76e+04\\
    53&1.17e+04&1.17e+04&1.18e+04&1.41e+04&1.39e+04&1.41e+04&1.75e+04&1.75e+04&1.75e+04\\
    54&9.04e+07&1.98e+10&4.72e+19&1.86e+08&4.05e+15&5.60e+19&1.29e+09&4.18e+17&4.71e+19\\
    55&2.08e+04&2.01e+04&2.08e+04&2.37e+04&2.34e+04&2.37e+04&3.06e+04&3.06e+04&3.06e+04\\
    \hline
\end{tabular}
\end{center}
	\vspace{-3mm}
	\caption{Test Results for Piecewise-smooth Problems with Noise} \label{tab:pwsmoothBobyqaCSCG}
\end{table}
\clearpage

\end{document}